\newtheorem{theorem}{Theorem}[section] 
\newtheorem{lemma}[theorem]{Lemma}
\newtheorem{proposition}[theorem]{Proposition}
\newtheorem{question}[theorem]{Question}
\theoremstyle{definition}
\newtheorem{definition}[theorem]{Definition}
\newtheorem{example}[theorem]{Example}
\theoremstyle{remark}
\newtheorem{remark}[theorem]{Remark}
\numberwithin{equation}{section}
\newcommand{\R}{{\mathbb R}}
\newcommand{\C}{{\mathbb C}}
\newcommand{\N}{{\mathbb N}}
\let\Re\relax
\DeclareMathOperator{\Re}{Re}
\DeclareMathOperator*{\supp}{supp}
\DeclareMathOperator{\WF}{WF}
\DeclareMathOperator{\vol}{vol}
\DeclareMathOperator{\inj}{inj}
\DeclareMathOperator*{\E}{\mathbb E}
\title{Can you hear your location on a manifold?}
\author{Emmett L. Wyman}
\address{Department of Mathematics, University of Rochester, Rochester NY}
\email{emmett.wyman@rochester.edu}
\author{Yakun Xi}
\address{School of Mathematical Sciences, Zhejiang University, Hangzhou 310027, PR China}
\email{yakunxi@zju.edu.cn}
\dedicatory{In memory of Steve Zelditch}
\begin{document}
	
	\begin{abstract}
		We introduce a variation on Kac's question, ``Can one hear the shape of a drum?" Instead of trying to identify a compact manifold and its metric via its Laplace--Beltrami spectrum, we ask if it is possible to uniquely identify a point $x$ on the manifold, up to symmetry, from its pointwise counting function
		\[
		N_x(\lambda) = \sum_{\lambda_j \leq \lambda} |e_j(x)|^2,
		\]
		where here $\Delta_g e_j = -\lambda_j^2 e_j$ and $e_j$ form an orthonormal basis for $L^2(M)$. 
		
		{ This problem has several natural physical interpretations, two of which are acoustic: 1. You are placed at an arbitrary location in a familiar room with your eyes closed. Can you identify your location in the room by clapping your hands once and listening to the resulting echoes and reverberations?
			2. If a drum of a known shape is struck at some unknown point, can you determine this point by listening to the quality of the sound the drum produces?}
		
		The main result of this paper provides an affirmative answer to this question for a generic class of metrics. We also probe the problem with a variety of simple examples, highlighting along the way helpful geometric invariants that can be pulled out of the pointwise counting function $N_x$. 
	\end{abstract}
	
	\maketitle
	
	\section{Introduction}
	
	Let $(M,g)$ be a compact {connected} Riemannian manifold with or without a boundary. We consider an orthonormal basis of Laplace--Beltrami eigenfunctions $e_j$ for $j = 1,2,\ldots$ satisfying
	\[
	\Delta_g e_j = -\lambda_j^2 e_j,
	\]
	and Dirichlet (or Neumann) boundary condition if $\partial M\neq \emptyset$.
	The Weyl counting function
	\[
	N(\lambda) = \#\{j : \lambda_j \leq \lambda\}
	\]
	counts the number of Laplace--Beltrami eigenvalues (with multiplicity) up to some threshold $\lambda$. It is possible to deduce  a significant amount of geometric information about $M$ from the counting function. The main term of the Weyl law
	\[
	N(\lambda) = (2\pi)^{-n} (\vol M) (\vol B^n) \lambda^n + O(\lambda^{n - 1})
	\]
	identifies both the dimension $n$ of $M$ and its volume. The asymptotics of the heat trace,
	\[
	\int_{-\infty}^\infty e^{-t\lambda^2} \, dN(\lambda)
	\]
	reveals these quantities too, but also the measure of the boundary (if there is one) and the Euler characteristic. The singularities of the wave trace,
	\[
	\int_{-\infty}^\infty e^{-it\lambda} \, dN(\lambda),
	\]
	mark the closed geodesics' lengths, order, and Maslov indices. 
	
	One might ask if these geometric quantities are enough to distinguish one manifold from another via their spectra.  Indeed, this is the essence of Kac's famous question, ``{\it Can one hear the shape of a drum?}" \cite{Kac66}. The answer to this question is complicated and depends very much on the precise setting of the problem.
	
	On the one hand, we now know of a number of examples of pairs of non-isometric manifolds that are isospectral. Milnor \cite{milnor} showed that there exists a pair of 16-dimensional flat tori, which have the same Laplace--Beltrami eigenvalues but different shapes. In 1992, Gordon, Webb, and Wolpert \cite{GWW92} constructed a pair of isospectral concave polygons in the plane with different shapes. In fact, this result settled Kac's original conjecture, which was formulated in terms of planar domains. Their method was later generalized by Buser, Conway, Doyle, and Semmler \cite{BCDS94} to construct numerous similar examples.
	
	On the other hand, there are some situations where one can distinguish a manifold from others in some restricted class. For instance, by the isoperimetric inequality, one can quickly identify a disk via its Dirichlet (or Neumann) spectrum amongst other planar regions with smooth boundaries. Nonetheless, it is challenging to prove any positive result for smooth domains other than the disk. There are several partial positive results \cite{DKW16,HZ12,PT03,PT12,PT16,Vi21,Z04,Z09}, which often require additional assumptions on the class of smooth domains.  Very recently, by applying a local version of the
	Birkhoff conjecture \cite{ADK16,KS18},  Hezari and Zelditch \cite{HZ22} proved that an ellipse of small eccentricity is  spectrally unique among all domains with smooth boundaries.
	
	We consider a variation on the ``can one hear the shape of a drum" problem.
	Suppose we know everything there is to know about the manifold $M$, but now fix some unknown point $x$ in the interior of $M$. At our disposal is complete information about the pointwise Weyl counting function
	\begin{equation}\label{pointwise counting}
		N_x(\lambda) = \sum_{\lambda_j \leq \lambda} |e_j(x)|^2.  
	\end{equation}
	Can we deduce the position of $x$? As before, various transforms of the derivative $dN_x(\lambda)$ reveal geometric quantities related to $M$ and the location of $x$ within it. Taking the cosine transform, in particular, reveals a natural physical interpretation of the problem. Observe:
	\begin{equation}\label{cosine transform exposition}
		\int_\R \cos(t\lambda) \, dN_x(\lambda) = \sum_{j} \cos(t\lambda_j) |e_j(x)|^2 =  \cos(t
		{\sqrt{-\Delta_g}})\delta_x(x).
	\end{equation}
	Here, $\cos(t
	{\sqrt{-\Delta_g}})f$ is the solution operator to the wave equation
	\[
	(\partial_t^2 - \Delta_g)u = 0 \qquad \text{ with } \qquad \begin{cases}
		u(0) = f \\
		\dot u(0) = 0.
	\end{cases}
	\]
	
	Noting that \eqref{cosine transform exposition} and \eqref{pointwise counting} determine one another, we can use the rightmost side of \eqref{cosine transform exposition} to interpret our problem as such: You stand on a manifold, make a single sharp ``snap'' sound, and then listen intently to its reverberations. If you have perfect hearing and perfect knowledge of the shape of the manifold, can you deduce your location within it?
	
	A symmetric manifold certainly defeats us, so instead, we propose:
	
	\begin{question}\label{echolocation question}
		Let $M$ be a compact {connected} Riemannian manifold with or without boundary and fix points $x$ and $y$ in the interior of $M$ for which $N_x = N_y$ identically. Must there be an isometry on $M$ that maps $x$ to $y$?
	\end{question}
	
	\begin{remark} \label{disk example} To illustrate how this kind of ``echolocation'' is plausible, consider a point $x$ in the interior of the unit disk $D$ in $\R^2$ whose Laplace operator $\Delta$ comes equipped with Dirichlet boundary conditions. By the arguments in \cite{Ivrii}, the lengths of billiard trajectories which depart and arrive at $x$ have lengths at singularities $t$ of the distribution
		\[
		\int_{-\infty}^\infty e^{it\lambda} \, dN_x(\lambda) = \sum_j e^{it\lambda_j} |e_j(x)|^2 = e^{it\sqrt{-\Delta_g}}(x,x).
		\]
		The first positive time $t$ at which $e^{it\sqrt{-\Delta}}(x,x)$ is singular is $2d(x,\partial D)$, the length of the shortest such trajectory. The distance $d(x,\partial D)$ determines $x$ uniquely up to rotation about the center of $D$. See Figure \ref{fig1} for an illustration of ``echolocation'' in a general planar domain. 
		
		\begin{figure}[h]
			\centering
			\includegraphics[width=0.8\textwidth]{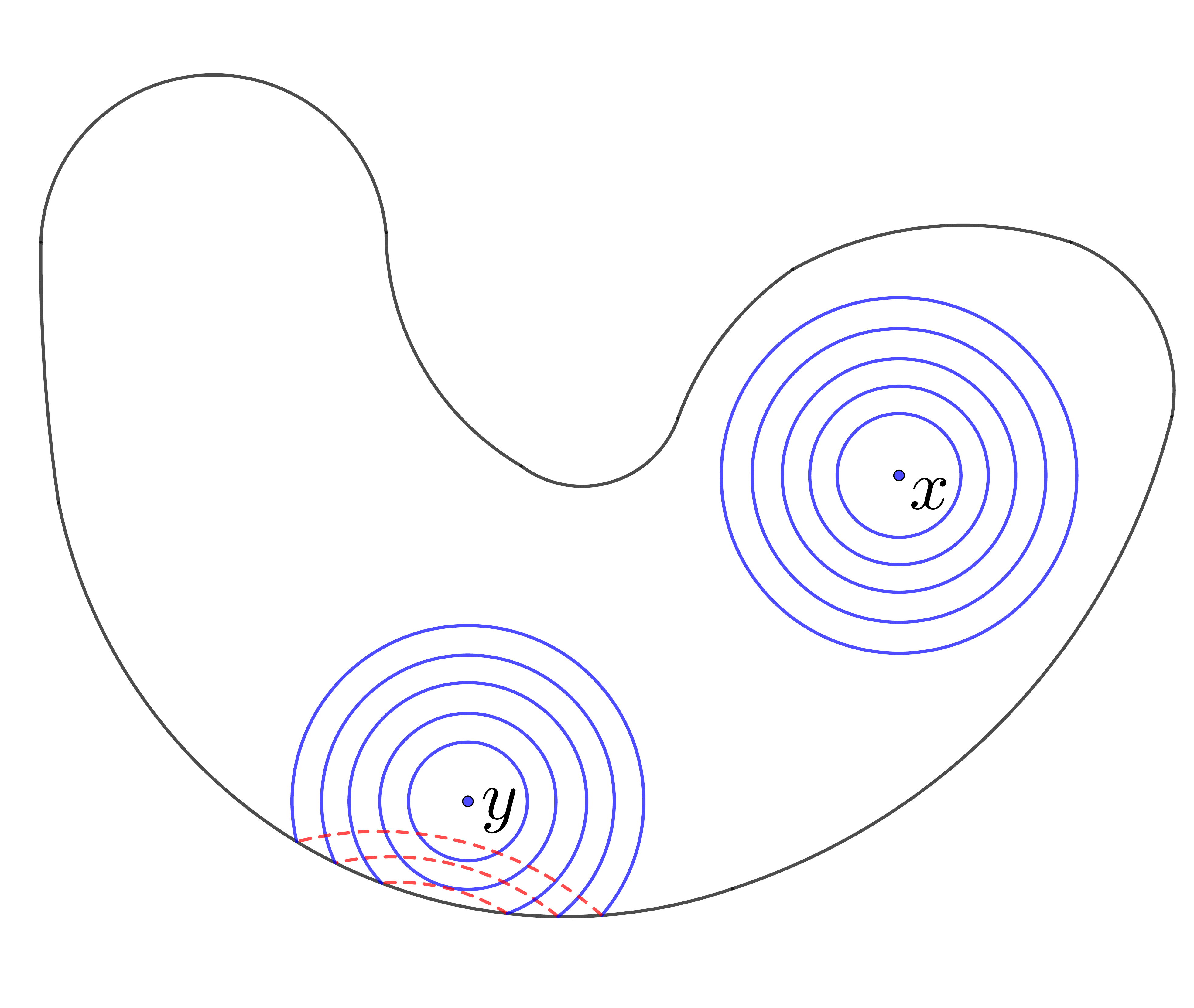}
			\caption{``Echolocation'' in a planar domain.}
			\label{fig1}
		\end{figure}
	\end{remark}
	
	{ Besides the echolocation interpretation, there are a few more physical interpretations of Question \ref{echolocation question}. Among them, the three most interesting ones can be phrased as:
		\begin{itemize}
			\item Can you hear at which point a drum is struck?
			\item Can you locate yourself using Brownian motions of free particles?
			\item Can you find the position of a strictly confined quantum particle, given the state of the quantum superposition after releasing it? 
		\end{itemize}
		In particular, the first interpretation above is closely related to a version of Kac's question introduced in \cite{Conway}. Detailed discussions 
		on different interpretations will be given in Section \ref{sec: interpretations}.}
	
	Our main result shows that the answer to Question \ref{echolocation question} is affirmative for generic manifolds without boundaries. Recall, a subset of a Baire topological space (e.g. complete metric space) is said to be \emph{residual} if it is the complement of a meager set, or equivalently the countable intersection of open dense subsets.
	
	\begin{theorem}\label{thm: generic echolocation}
		Let $M$ be a compact smooth manifold without boundary, $\dim M \geq 2$. Then, there exists a residual class of metrics in the $C^\infty$ topology such that if $N_x = N_y$ for some $x,y \in M$, then $x = y$.
	\end{theorem}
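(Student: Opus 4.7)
The plan is to combine Uhlenbeck's generic simplicity theorem with a Sard--Smale transversality argument applied to the Minakshisundaram--Pleijel heat invariants. By Uhlenbeck's theorem, the set $\mathcal{M}_1$ of $C^\infty$ metrics on $M$ with simple Laplace spectrum is residual. For $g \in \mathcal{M}_1$, the jump of $N_x$ at $\lambda_j$ is exactly $|e_j(x)|^2$, so the equality $N_x \equiv N_y$ is equivalent to $|e_j(x)|^2 = |e_j(y)|^2$ for every $j$; summing against $e^{-t\lambda_j^2}$ this in turn is equivalent to agreement of the on-diagonal heat kernel, $K_g(t,x,x) = K_g(t,y,y)$ for all $t > 0$. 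The Minakshisundaram--Pleijel expansion
\[
K_g(t,x,x) \sim (4\pi t)^{-n/2} \sum_{k \geq 0} a_k(g,x)\, t^k \qquad (t \to 0^+)
\]
then forces $a_k(g,x) = a_k(g,y)$ for every $k$, where $a_k(g,\cdot)$ is a universal polynomial in $g$ and its covariant derivatives at the base point ($a_0 = 1$, $a_1 = R/6$, and so on).

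The problem is thus reduced to showing that for a residual set of metrics, the map $x \mapsto \bigl(a_k(g,x)\bigr)_{k \geq 1}$ from $M$ to $\R^\infty$ is injective. I would set up the smooth evaluation map
\[
F_K : \mathcal{R}(M) \times \bigl( M \times M \setminus \mathrm{Diag} \bigr) \to \R^K, \qquad F_K(g,x,y) = \bigl( a_k(g,x) - a_k(g,y) \bigr)_{k=1}^K,
\]
where $\mathcal{R}(M)$ is the space of smooth metrics, and prove that it is transversal to $\{0\} \in \R^K$. Locality is the key feature: $a_k(g,x)$ depends only on the jet of $g$ at $x$, so a perturbation $h$ supported in a small ball around $x$ disjoint from $y$ changes $a_k(g,x)$ but fixes $a_k(g,y)$. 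Using Gilkey's explicit polynomial formulas, one shows that varying the $2K$-jet of such $h$ sweeps the tuple $(\partial_h a_k(g,x))_{k=1}^K$ through an open set of $\R^K$, which yields the transversality. The Sard--Smale theorem then produces, for each $K$, a residual class $\mathcal{G}_K$ of metrics for which $F_K(g,\cdot,\cdot)^{-1}(0)$ is a smooth submanifold of $M \times M \setminus \mathrm{Diag}$ of dimension $2n - K$. Choosing $K = 2n+1$ makes this submanifold empty, so the residual class required is $\mathcal{G} := \mathcal{M}_1 \cap \mathcal{G}_{2n+1}$.

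The main obstacle is the transversality step: verifying that the linearizations $\partial_h a_k(g,x)$, $k=1,\ldots,K$, are linearly independent as functionals of $h$. Although raising $k$ gives access to higher jets of the metric at $x$, the invariants $a_k$ obey numerous nontrivial relations coming from the $O(n)$-invariance of curvature polynomials and the second Bianchi identity, so linear independence is not automatic and has to be extracted from Gilkey's theorem on the richness of scalar heat invariants together with an explicit computation of the linearization for sufficiently many $k$. A secondary technical point is that $\mathcal{R}(M)$ with the $C^\infty$ topology is not a Banach manifold; I would handle this in the standard way by running Sard--Smale in the $C^\ell$ category for each finite $\ell$ and then intersecting the resulting residual sets to descend back to $C^\infty$.
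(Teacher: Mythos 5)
Your reduction from $N_x \equiv N_y$ to equality of on-diagonal heat kernels $K_g(t,x,x) = K_g(t,y,y)$ and then to equality of all Minakshisundaram--Pleijel coefficients $a_k(g,x) = a_k(g,y)$ is correct (and the Uhlenbeck invocation is actually unnecessary for this step: the Laplace transform of $dN_x$ is $K_g(t,x,x)$ regardless of multiplicities). The overall architecture---set up an evaluation map on metrics $\times$ (off-diagonal pairs), prove transversality using locally supported perturbations, invoke a Sard-type argument to make the bad set meager, take $K = 2n+1$ so the zero set has negative expected dimension---is sound and parallels the paper's Section 4 mechanism. However, your approach is genuinely different from the paper's in the central technical step, and that step is exactly where the gap lives.

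The paper does \emph{not} go through heat invariants. It works with the renormalized wave trace $\varphi(g,t,x) = \sum_j \cos(t\lambda_j)|e_j(x)|^2$ and its directional derivatives $D_h\varphi$ under conformal perturbations $e^{sh}g$. The crucial structural input is finite speed of propagation (Proposition~\ref{prop: support}): $D_h\varphi(g,t,x) = 0$ unless $2\,d_g(x,\supp h) \le |t|$. By choosing $h_k$ supported in an annulus at distance $\approx 2^{-k}/2$ from $x$ and pairing it with the time $t_k = 2^{-k}$, the paper gets an \emph{upper triangular} derivative matrix essentially for free; the only thing left to verify is that each diagonal entry $D_{h_k}\varphi(g,t_k,x)$ can be made nonzero, which is Lemma~\ref{lem: h construction}, proved by a stationary-phase computation with a rapidly oscillating $h$. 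Because the $h_k$ live on separated scales, there is no need to prove any joint linear independence of $2n+1$ functionals: each diagonal nonvanishing statement is a one-dimensional problem. Your heat-invariant approach has no analogue of this scale separation, because $a_k(g,x)$ depends only on the $2k$-jet of $g$ at $x$ and all the perturbations must be supported in the same small ball around $x$. Consequently you must establish head-on that the linearizations $h \mapsto \partial_h a_k(g,x)$, $k = 1,\dots,2n+1$, are linearly independent functionals. You flag this as ``the main obstacle,'' but the proposal only gestures at Gilkey's formulas without computing anything, and the claim is genuinely delicate: the $a_k$ are $O(n)$-invariant curvature polynomials constrained by the Bianchi identities, and near special base metrics (e.g.\ Einstein or flat germs) many of the leading nonlinear terms in $a_k$ linearize to zero, leaving only the $\Delta^{k-1}R$ contributions whose independence as functionals of conformal $h$ would still need to be worked out. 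Until that linear-algebra-on-jets claim is actually proven, the argument does not close. A secondary point: the paper sidesteps the Banach-manifold issue you mention by proving a bespoke finite-dimensional Sard criterion (Theorem~\ref{thm: meager}) that works directly in the Fr\'echet category, avoiding the $C^\ell$-and-intersect workaround; if you pursued your route you could reuse that tool verbatim.
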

	
	\begin{remark}\label{incomplete remark}
		It is not unusual to prove generic statements regarding eigenfunctions and eigenvalues of the Laplacian on a manifold. For instance, Uhlenbeck \cite{Uhlenbeck} showed that, generically, the eigenspaces are all simple. Indeed, this result of Uhlenbeck further connects our Question \ref{echolocation question} to the original hearing the shape of a drum problem. For a manifold with simple eigenspaces, one should be able to hear all the eigenvalues of the Laplacian from $N_x$, as long as $e_j(x)\neq 0$ for all $j\ge1$, and hence recover the Weyl counting function $N(\lambda)$ from $N_x$ for a generic choice of $x$.
	\end{remark}

	\subsection{Structure of the paper}
	{In Section \ref{sec: interpretations}, we discuss other natural interpretations of Question \ref{echolocation question}.}
	Section \ref{sec: examples} houses a number of simple examples of manifolds on which echolocation is possible. In some of these examples, we use the pointwise counting function $N_x$ directly. In others, we pull geometric invariants out of the pointwise counting function and use those.
	
	Sections \ref{sec: generic injections}, \ref{sec: proof of main}, and \ref{sec: proof lemma} are dedicated to the proof of our main result, Theorem \ref{thm: generic echolocation}. In Section \ref{sec: generic injections}, we develop a somewhat general criterion for showing injections are generic in the topological sense. In Section \ref{sec: proof of main}, we use this criterion to reduce Theorem \ref{thm: generic echolocation} to the construction of a special set of perturbations of the Riemannian metric. Section \ref{sec: proof lemma} contains the key stationary phase argument needed to complete the construction.
	
	Finally, we discuss further directions of research in Section \ref{sec: further directions}.

	\subsection*{Acknowledgements}Xi was  supported by the National Key Research and Development
	Program of China No. 2022YFA1007200
	and NSF China Grant No. 12171424. Wyman was partially supported by NSF grant DMS-2204397 and by the AMS-Simons Travel Grants. The authors would like to thank Allan Greenleaf, Hadmid Hezari, Alex Iosevich, Steven Kleene, Jonathan Pakianathan, Chris Sogge, Chengbo Wang, and Meng Wang for helpful conversations.
	
	{\section{Further physical interpretations}
		\label{sec: interpretations}
		{In this section, we present a few more physical inverse problems mathematically equivalent to Question \ref{echolocation question}.}
		\subsection{Can you hear where a drum is struck?}
		
		We offer another acoustic 
		interpretation of Question \ref{echolocation question}.  This interpretation has a closer relationship to Kac's \cite{Kac66} original question: ``Can one hear the shape of a drum?''
		
		In the work of Buser, Conway, Doyle and Semmler \cite{Conway}, they identify a pair of domains which are not only isospectral, but also {\it homophonic}. They define two planar domains to be {homophonic} if each domain has a distinguished point such that corresponding normalized Dirichlet eigenfunctions yield  equal values at the distinguished
		points. 
		
		A related concept, {\it timbre}, is introduced by Emilio Pisanty in his Math Stack Exchange question, ``Can you hear the shape of a drum by choosing where to drum it?" \cite{MSE}.  Pisanty defines the {timbre} of a drum at a point $x$ as the sequence of values  $$\sqrt{\sum_{\lambda_j=\lambda} |e_j(x)|^2},$$ where $\lambda$ ranges over distinct eigenvalues of $\sqrt{-\Delta_g}$. 
		
		These notions of homophonicity and timbre can certainly be extended to general Riemannian manifolds, with or without boundaries.
		
		\begin{definition}\label{homophonic} Two pointed (with basepoints in the interior) compact connected Riemannian manifolds  $(M_1,g_1,x_1)$ and $(M_2,g_2,x_2)$ are deemed homophonic if 
			$N_{x_1}(\lambda)=N_{x_2}(\lambda)$ for all $\lambda\ge0$. 
		\end{definition}

		We remark that our definition is slightly different from that in \cite{Conway} because we consider the values of $N_x$ instead of individual eigenfunctions. Nevertheless, as we will demonstrate later, two pointed manifolds that are homophonic in the sense of Definition \ref{homophonic} imply that they, when considered as drums, produce identical sounds in a very strong sense when struck at basepoints $x_1$ and $x_2$. In other words, they share the same timbre as defined in \cite{MSE}.
		
		A natural question to pose is: given {\it one} Riemannian manifold $(M,g)$, if $(M,g,x_1)$ and $(M,g,x_2)$ are homophonic, must $x_1$ be essentially the same as $x_2$?  Specifically, must there exist an isometry $i:M\to M$ such that $i(x_1)=x_2$? This question is mathematically equivalent to our Question \ref{echolocation question}.

		From a physical standpoint, this gives us another equivalent formulation of our echolocation problem: Given a drum of known shape, if it is struck at a point $x$, can you determine $x$ up to isometry by listening to the sounds the drum produces?
		
		For completeness, we will now argue that $N_x$ determines the sound of a drum when it is struck at point $x$ in a very strong sense. Suppose a drum is modeled by a Riemannian manifold $(M,g)$, and we hit this drum with a drumstick at a point $x\in M$, the drum will then vibrate. The vibration of the drum can be described mathematically by the solution of the following wave equation:\footnote{Perhaps it would be more physically realistic to take initial conditions $u(0) = 0$ and $\dot u(0) = \delta_x$, but this does not change the argument.}
		\[
		(\partial_t^2 - \Delta_g)u = 0 \qquad \text{ with } \qquad \begin{cases}
			u(0) = \delta_x \\
			\dot u(0) = 0.
		\end{cases}
		\]
		
		Using the wave kernel on $M$ we can explicitly write down $u$ as
		\begin{align*}
			u(t,y)&=\int_M\cos(t\sqrt{-\Delta_g})(y,z)\delta_x(z)dz\\
			&=\cos(t\sqrt{-\Delta_g})(y,x)=\sum_{\lambda_j}\cos(t\lambda_j)\overline {e_j(x)} e_j(y).
		\end{align*}
		Thus we will be hearing sound composed of various frequencies $\lambda_j$ each with a possibly different volume. For a given frequency $\lambda$ the sound we hear is
		\[
		\sum_{\lambda_j=\lambda}\cos(t\lambda_j)\overline {e_j(x)} e_j(y).
		\]
		The volume of this sound is proportional to the maximal $L^2_y(M)$ norm of the function above, which is the square root of
		\[
		\sum_{\lambda_j=\lambda} |e_j(x)|^2=N_x(\lambda)-\lim_{\epsilon\to 0}N_x(\lambda-\epsilon).
		\]
		To sum up, by striking the drum $M$ at the point $x$, we will be able to hear the above quantities for all values of $\lambda$, which is equivalent to hearing the function $N_x(\lambda)$.
		
		Lastly, we remark that the example of Buser, Conway, Doyle, and Semmler also serves as a counterexample to the echolocation problem if we omit the connectedness assumption.
		
		\subsection{Can you locate yourself using  Brownian motion?}
		In this section, we present a physical interpretation of the echolocation problem (Question \ref{echolocation question}), using the concept of a heat kernel.
		
		Brownian motion on a compact Riemannian manifold $(M,g)$ (with Neumann boundary condition if $\partial M\neq \emptyset$), can be defined as a Markov process with its transition density function being $p(t, x, y)$. This function $p(t,x,y)$ represents the heat kernel associated with the one half the Laplace--Beltrami operator. For an in-depth discussion on this, refer to the lecture notes \cite{hsu}.
		
		For each $t > 0$, the pointwise Weyl counting function $N_x$ can be linked to the heat kernel as follows:
		\[
		\int_{-\infty}^\infty e^{-t\lambda^2/2} dN_x(\lambda) = \sum_j e^{-t\lambda_j^2/2} e_j(x) \overline{e_j(x)} = e^{\frac 12t\Delta_g}(x,x)=p(t,x,y)|_{y=x}.
		\]
		This means that a complete understanding of $N_x$ as a distribution in $\lambda$ leads to a complete understanding of $p(t,x,x)$. According to Lerch's theorem, the reverse is also true.
		
		From a physical perspective, consider a situation where a particle is released at a specific point $x\in M$ at time zero and is permitted to engage in Brownian motion within $M$. The term $p(t,x,x)$ here represents the probability density function of the particle returning to the initial point $x$ at a specific time $t$. Hence, the heat kernel interpretation of Question \ref{echolocation question} is as follows: If you find yourself in a familiar room where movement is restricted, could you determine your precise location by releasing particles at time 0 and observing their probability of returning to your position at any given time $t$ during a Brownian motion experiment?

		{
			We remark also that
			\[
			e^{\frac 1 2 t \Delta_g}(x,x) = u(t,x)
			\]
			where $u$ solves the initial value problem
			\[
			\partial_t u - \frac 1 2 \Delta_g u = 0 \qquad u(0) = \delta_x.
			\]
			The audible quantity $e^{\frac 1 2 t \Delta_g}(x,x)$ can then be viewed as follows: Take a unit of thermal energy and concentrate it at $x$, but then let it diffuse over the manifold. $e^{\frac 1 2 t \Delta_g}(x,x)$ is then the temperature of $M$ at $x$ after time $t$. If we know this temperature for all $t$, can we deduce $x$ up to symmetry?}
		
		\subsection{Schr\"odinger interpretations}\label{Schrodinger}
		
		{Just as there are two interpretations associated with the wave equation, there are two distinct physical interpretations for Question \ref{echolocation question} when considering the Schr\"odinger equation. This stems from the fact that both the wave and Schr\"odinger equations conserve some kind of $L^2$ energy.}
		
		\subsubsection{Quantum superposition}
		
		One can model the evolution of a strictly confined particle at a point $x$ at time $0$, by solving the following Schr\"odinger equation:
		\[
		(i\partial_t - \Delta_g)u = 0 \qquad \text{ with } \qquad {u(0) = \delta_x.}
		\]
		By the Schr\"odinger kernel on $M$, we can explicitly express $u$ as
		\[
		u(t,y)=\int_Me^{it\Delta_g}(y,z)\delta_x(z)dz=e^{-it\Delta_g}(y,x)=\sum_{\lambda_j}e^{it\lambda_j^2}\overline {e_j(x)} e_j(y).
		\]
		Therefore, the resulting solution is the weighted superposition of eigenstates $e_j$. However, the total $L^2$ norm of $u$ is infinite, prohibiting us from interpreting everything probabilistically. We overcome this limitation by choosing a natural approximation of $\delta_x$. Indeed, for each $\Lambda\in\mathbb N,$ we define
		\[\delta_{x,\Lambda}(y):=\sum_{\lambda_j\le \Lambda}e_j(x)\overline{e_j(y)}.\]
		Then it is clear that $\delta_{x,\Lambda}\to\delta_x$ as $\Lambda\to\infty$ in the sense of distribution. We interpret $\delta_{x,\Lambda}$ as an approximation to a particle strictly confined at $x$. If we solve
		\[
		(i\partial_t - \Delta_g)u_\Lambda = 0 \qquad \text{ with } \qquad u_\Lambda(0) = \delta_{x,\Lambda}.
		\]
		we get
		\[
		u_\Lambda(t,y)=\int_Me^{it\Delta_g}(y,z)\delta_{x,\Lambda}(z)dz=\sum_{\lambda_j\le \Lambda}e^{-it\lambda_j^2}\overline {e_j(x)} e_j(y).
		\]
		Therefore, the probability of finding the resulting particle with energy less or equal to $\lambda^2$ is given by
		\[P_\Lambda(\lambda):=\dfrac{N_x(\lambda)}{N_x(\Lambda)}.
		\]
		Note that we have the pointwise Weyl law (See e.g. \cite{SFIO})
		\[
		N_x(\Lambda)=\frac{\omega_d}{(2\pi)^{d}}\Lambda^d+O(\Lambda^{d-1}),
		\]
		where $\omega_d=\vol B^d$ denotes the volume of the  unit ball in $\mathbb R^d$.
		
		Therefore, we can recover $N_x(\lambda)$ from the probability $P_\Lambda(\lambda)$ by the formula
		\[
		N_x(\lambda)=P_\Lambda(\lambda)N_x(\Lambda)
		=\frac{\omega_d}{(2\pi)^{d}}P_\Lambda(\lambda)\Lambda^d+O(\lambda^d\Lambda^{-d}\Lambda^{d-1})=\frac{\omega_d}{(2\pi)^{d}}P_\Lambda(\lambda)\Lambda^d+o(1).
		\]
		Here we have chosen $\Lambda$ large enough so that $\lambda\ll \Lambda^\frac1d$.
		
		In physical terms, if we release a strictly confined quantum particle at the point $x$, then it immediately becomes the superposition of various quantum states. If we repeat this experiment, we can observe all the conditional probabilities of finding it in an eigenstate with energy $\lambda^2$ less or equal to a sufficiently large energy cap $\Lambda^2$. Can we then infer the location of $x$?
		
		\subsubsection{Probability density associated with an energy level}
		
		There is another quantum mechanical interpretation of this problem. Given a compact manifold $(M,g)$, we may ask where a quantum particle constrained to a single energy level $E$ is likely to be found. Assuming the potential function is zero, such a particle is represented by a function $\psi$ solving
		\[
		\Delta_g \psi + E \psi = 0, \qquad \| \psi \|_{L^2(M)} = 1.
		\]
		Given our usual orthonormal basis $e_1, e_2, \ldots$ of eigenfunctions, we may characterize $\psi$ as a linear combination
		\[
		\sum_{\lambda_j^2 = E} z_j e_j \qquad \text{ where } \qquad \sum_{\lambda_j^2 = E} |z_j|^2 = 1.
		\]
		The probability density function for the particle is given by $|\psi|^2$. If the multiplicity $m_E$ of the eigenspace corresponding to energy level $E$ is greater than $1$, we have some freedom in our choice of coefficients $z_j$. Keeping in the spirit of the problem, we take the coefficients $(z_j)_{\lambda_j^2 = E}$ to be drawn randomly and uniformly from the unit sphere in $\C^{m_E}$. The probability density function of our particle over $M$ is then
		\begin{align*}
			\E |\psi|^2 = \E \left| \sum_{\lambda_j^2 = E} z_j e_j \right|^2 = \E \sum_{\lambda_j^2 = \lambda_k^2 = E} z_j \overline{z_k} e_j \overline{e_k} = \frac{1}{m_E} \sum_{\lambda_j^2 = E} |e_j|^2,
		\end{align*}
		where in the last step we have used
		\[
		\E[z_j \overline{z_k}] = \begin{cases}
			\frac{1}{m_E} & \text{ if } j = k \\
			0 & \text{ if } j \neq k.
		\end{cases}
		\]
		Since the manifold $(M,g)$ is familiar to us, we also know the multiplicities of the Laplace eigenvalues. Hence, the probability density function above evaluated at $x$ may be written as 
		\[
		\dfrac{\displaystyle N_x(\lambda) - \lim_{\epsilon \to 0}N_x(\lambda - \epsilon)}{\#\{j : \lambda_j = \lambda\}}
		\]
		where $\lambda^2 = E$.
		
		Now we are ready to give the final physical interpretation of Question \ref{echolocation question}. Given the probability (density) of finding a typical quantum particle with energy $E$ at $x$, for all possible values of $E$, can we determine the location of $x$?} 
	
	\section{Examples and audible quantities} \label{sec: examples}
	
	{ 
		For the sake of precision, we define an \emph{audible quantity} to be any function $f$ on $M$ satisfying $f(x) = f(y)$ whenever $N_x = N_y$ identically. An audible quantity is then completely determined by the pointwise counting functions. Note, $f$ can take any kind of object as a value---including sets and functions---as opposed to only numbers.
	}
	{ Here and throughout, we will say that {\it echolocation holds} on a manifold if Question \ref{echolocation question} is answered in the affirmative for that manifold.}
	
	\begin{example}[A string with fixed ends] Our first example should be a simple string $M = [0,a]$ equipped with Dirichlet boundary conditions. Here, the first eigenfunction reads 
		\[
		e_1(x) = \sqrt{\frac{2}{a}} \sin(\pi x/a),
		\]
		and we note that the audible quantity
		\[
		N_x(\pi/a) = |e_1(x)|^2 = \frac{2}{a} \sin^2(\pi x /a)
		\]
		alone is enough to deduce the location of $x$ up to a reflection about the midpoint of the string.
	\end{example}
	
	{\begin{example}[A rectangular plate]
			Consider a rectangular plate $M = [0,a] \times [0,b]$ with Dirichlet boundary conditions. By rescaling, we may assume that $a=1$ and $0<b\le 1$. A complete orthonormal basis of eigenfunctions can be written down as
			\[
			e_{n,m}(x,y) = \frac{2}{\sqrt{b}} \sin(\pi n x) \sin(\pi m y/b), \qquad n,m = 1,2,\ldots
			\]
			with respective eigenvalues
			\[
			\lambda_{n,m} = \pi \sqrt{n^2 + \frac{m^2}{b^2}}.
			\]
			We first handle the case when $0<b<1.$ In this case, the first two eigenvalues $\lambda_{1,1}$ and $\lambda_{2,1}$ are both simple. We consider the audible quantity
			\begin{equation}
				\label{eq: audible} 
				\frac{N_{(x,y)}(\lambda_{2,1}) - N_{(x,y)}(\lambda_{1,1})}{N_{(x,y)}(\lambda_{1,1})} = \frac{\sin^2(2\pi x) \sin^2(\pi y/b)}{\sin^2(\pi x) \sin^2(\pi y / b)} = 2 \cos^2(\pi x),
			\end{equation}
			which is  enough to identify the $x$-coordinate up to symmetry. We can then determine the $y$-coordinate up to symmetry by
			\[
			\sin^2(\pi y/b)= \frac{N_{(x,y)}(\lambda_{1,1})}{1-\cos^2(\pi x)}.
			\]
			Now if $b=a=1,$ our domain becomes a square, which has an additional mirror symmetry along the diagonal. As a result, $\lambda_{2,1}=\lambda_{1,2}$ has multiplicity 2.
			The same audible quantity \eqref{eq: audible} now reads
			\begin{equation*}
				\label{eq: audible} 
				\frac{N_{(x,y)}(\lambda_{2,1}) - N_{(x,y)}(\lambda_{1,1})}{N_{(x,y)}(\lambda_{1,1})} =  2 (\cos^2(\pi x)+\cos^2(\pi y))=  4-2(\sin^2(\pi x)+\sin^2(\pi y)).
			\end{equation*}
			Noting $N_{(x,y)}(\lambda_{1,1})=\sin^2(\pi x)\cdot\sin^2(\pi y)$, we see that both the product and sum of $\sin^2(\pi x)$ and $\sin^2(\pi y)$ are  audible. Solving the associated quadratic equation concludes that echolocation holds on this square plate.
			
			{We see an immediate jump in complexity---much of it number-theoretic---when we consider hyperrectangles. By arguments similar to the $0<b<1$ case above, one can see that echolocation holds for hyperrectangle with distinct side lengths. However, more general cases seem much more difficult in higher dimensions.}
	\end{example}}

	We now try to extract what we can from the heat kernel, though we do not venture past the first two terms. We have for each $t > 0$
	\[
	\int_{-\infty}^\infty e^{-t\lambda^2} dN_x(\lambda) = \sum_j e^{-t\lambda_j^2} e_j(x) \overline{e_j(x)} = e^{t\Delta_g}(x,x).
	\]
	It is well-known (e.g., \cite{heatsecondterm}) that for manifolds without boundary, the heat kernel has a local asymptotic expansion
	\[
	(4\pi t)^{n/2} e^{t\Delta_g}(x,x) = 1 + \frac{t}{3} K(x) + O(t^2), \qquad 0 < t \ll 1,
	\]
	where $K(x)$ denotes the scalar curvature of $M$ at $x$. We conclude:
	
	\begin{proposition}\label{curvature} The scalar curvature at $x$ is an audible quantity.
	\end{proposition}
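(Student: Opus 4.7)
The plan is to read off $K(x)$ as an explicit coefficient in the short-time asymptotic expansion of the on-diagonal heat kernel, which is itself determined by $N_x$ through the formula displayed just before the proposition.

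First, I would verify the key reduction: if $N_x = N_y$ identically, then the induced Stieltjes measures $dN_x$ and $dN_y$ on $[0,\infty)$ are equal, so their Laplace transforms against the test function $e^{-t\lambda^2}$ agree for every $t > 0$. Combining this with the identity
\[
\int_{-\infty}^\infty e^{-t\lambda^2}\, dN_x(\lambda) = e^{t\Delta_g}(x,x),
\]
recalled in the excerpt, yields $e^{t\Delta_g}(x,x) = e^{t\Delta_g}(y,y)$ for all $t > 0$.

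Next, I would invoke the local heat kernel expansion cited in the excerpt: for any point $p \in M$,
\[
(4\pi t)^{n/2} e^{t\Delta_g}(p,p) = 1 + \frac{t}{3} K(p) + O(t^2), \qquad 0 < t \ll 1.
\]
Applying this at both $p = x$ and $p = y$, then subtracting, gives
\[
\frac{t}{3}\bigl(K(x) - K(y)\bigr) = O(t^2)
\]
as $t \to 0^+$. Dividing by $t/3$ and letting $t \to 0^+$ forces $K(x) = K(y)$. Thus the function $x \mapsto K(x)$ satisfies the defining property of an audible quantity.

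There is no substantive obstacle here: everything we need (Laplace-transform uniqueness for the positive measure $dN_x$, the identification with the heat kernel, and the first two terms of the on-diagonal heat expansion) is already in place. The only point worth flagging is that the expansion quoted is the \emph{boundaryless} local expansion; the statement is phrased for general $M$, but the proposition is most naturally understood when $\partial M = \emptyset$, or more generally at interior points $x$ with $t$ taken small enough that boundary corrections contribute only exponentially small errors that do not affect the $O(t^2)$ remainder.
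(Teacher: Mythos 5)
Your argument is correct and is exactly the paper's: use the identity $\int e^{-t\lambda^2}\,dN_x(\lambda) = e^{t\Delta_g}(x,x)$ together with the local on-diagonal heat expansion $(4\pi t)^{n/2}e^{t\Delta_g}(x,x) = 1 + \tfrac{t}{3}K(x) + O(t^2)$ to read off $K(x)$. You have merely spelled out the short subtraction/limit step that the paper leaves implicit after displaying the expansion.
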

	
	This audible quantity is enough, for example, to show that echolocation is possible on an ellipsoid of revolution.
	
	\begin{example}[An ellipsoid of revolution.]Let $M$ be a non-spherical ellipsoid of revolution, e.g., the surface
		\[
		x^2 + y^2 + \frac{z^2}{a^2} = 1 
		\]
		in $\R^3$ with $0 < a \neq 1$. Recall that the Gaussian curvature of a surface embedded in $\R^3$ coincides with the sectional and also the scalar curvature. It is a routine yet somewhat tedious calculation to verify the Gaussian curvature of this ellipsoid is given by
		\[
		K(x,y,z) = \frac{a^2}{(z^2(1 - a^{-2}) + a^2)^2}.
		\]
		This audible quantity is enough to determine the $z$-coordinate of a point up to sign, which in turn determines a point up to symmetry. {We remark that similar arguments work on a number of surfaces of revolution. For example, one can easily show that echolocation holds on a torus generated by revolving a circle in $\mathbb R^3$.}
	\end{example}
	
	Next, we extract what we can from the wave equation. We present here a brief wavefront set calculation that arises in some form or another in most results about pointwise asymptotics. For examples of this kind of calculation, see \cite{DG, SFIO, Hang, SZDuke}. For background on the calculus of wavefront sets and microlocal analysis, we refer the reader to \cite{DuistermaatFIOs, HormanderPaper, SFIO}.
	
	Recall the identity
	\[
	\int_{-\infty}^\infty e^{it\lambda} \, dN_x(\lambda) = e^{it\sqrt{-\Delta_g}}(x,x).
	\]
	We interpret the right side as the distribution in $t$ given by the composition $U \circ (\delta_x \otimes \delta_x)$ where $U$ is an operator with distribution kernel $U(t,x,y) = e^{it\sqrt{-\Delta_g}}(x,y)$. By the results in \cite[Chapter 29]{HormanderIV}, the wavefront relation of $U$ is given by
	\[
	\WF'(U) = \{(t,p(x,\xi) ; x, -\xi, G^t(x,\xi)) : t \in \R, \ (x,\xi) \in \dot T^*M \}
	\]
	where
	\[
	p(x,\xi) = |\xi|_{g(x)} = \left( \sum_{i,j = 1}^n g^{ij}(x) \xi_i \xi_j \right)^{\frac12}
	\]
	is the principal symbol of the Laplace--Beltrami operator and $G^t$ is the time-$t$ homogeneous geodesic flow on $\dot T^*M$. For an introduction to wavefront sets and their calculus, see \cite{DuistermaatFIOs} and \cite{HormanderPaper}. The wavefront set of $\delta_x \otimes \delta_x$ is, conveniently, the twofold product of the wavefront set of $\delta_x$, i.e.
	\[
	\WF(\delta_x\otimes \delta_x) = \{(x,\xi, x, \eta) : \xi,\eta \in \dot T_x^* M \}.
	\]
	The composition then satisfies
	\begin{align*}
		\WF(U \circ (\delta_x \otimes \delta_x)) &\subset \WF'(U) \circ \WF(\delta_x \otimes \delta_x) \\
		&= \{ (t, \tau) : \tau > 0 , \ \exists \xi,\eta \in \dot T_x^*M \text{ with } G^t(x,\xi) = (x,\eta) \}.
	\end{align*}
	
	In other words, the $t$-singular support of $e^{it\sqrt{-\Delta_g}}(x,x)$ can only occur at times equal to the length of a \emph{looping geodesic}, a unit-speed geodesic on $M$ which departs $x$ and arrives back at $x$ after time $t$. We will call such times the \emph{looping times at $x$}.

	\begin{proposition}\label{looping times} The $t$-singular support of $e^{it\sqrt{-\Delta_g}}(x,x)$ is audible and contained in the set of looping times at $x$.
	\end{proposition}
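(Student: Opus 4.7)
My plan is to handle the two claims in the proposition---audibility and the inclusion in the looping times---separately, and to observe that essentially all of the work has already been done in the wavefront set computation carried out in the paragraphs just before the statement.

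For audibility, I would start by noting that the pointwise Weyl law gives $N_x(\lambda) = O(\lambda^n)$, so $dN_x$ (extended by zero for $\lambda < 0$) is a tempered distribution on $\mathbb{R}$. Its Fourier transform is therefore a well-defined tempered distribution in $t$, and the identity
\[
\int_{-\infty}^\infty e^{it\lambda} \, dN_x(\lambda) = \sum_j e^{it\lambda_j} |e_j(x)|^2 = e^{it\sqrt{-\Delta_g}}(x,x)
\]
identifies this Fourier transform with $e^{it\sqrt{-\Delta_g}}(x,x)$ viewed as a distribution in $t$. Since $N_x$ determines $dN_x$ and hence its Fourier transform, $N_x$ determines the distribution $e^{it\sqrt{-\Delta_g}}(x,x)$ completely, and in particular its singular support is an audible quantity in the sense defined at the start of this section.

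For the inclusion, I would simply combine the wavefront relation
\[
\WF(U \circ (\delta_x \otimes \delta_x)) \subset \{(t,\tau) : \tau > 0, \ \exists \, \xi, \eta \in \dot T_x^* M \text{ with } G^t(x,\xi) = (x,\eta)\}
\]
derived in the paragraphs above with the general fact that the singular support of a distribution is the projection of its wavefront set onto the base. Any $t$ in the singular support therefore satisfies $G^t(x,\xi) = (x,\eta)$ for some $\xi, \eta \in \dot T_x^* M$. Using the positive-homogeneity of the Hamiltonian flow of the symbol $p(x,\xi) = |\xi|_{g(x)}$, I would rescale $\xi$ to have unit length; the projection of the resulting bicharacteristic to $M$ is then a unit-speed geodesic which departs $x$ and arrives back at $x$ after time $|t|$, so $|t|$ is a looping time at $x$.

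The argument is essentially immediate given the preceding microlocal setup, so I do not anticipate a serious obstacle. The only step that requires any real care is the homogeneity rescaling used to pass from an arbitrary nonzero covector to a unit covector, which is what guarantees that time and arc length coincide along the projected bicharacteristic; once this is spelled out, both claims follow at once.
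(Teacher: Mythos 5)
Your argument is correct and matches the paper's approach exactly: the paper states this proposition as an immediate consequence of the wavefront set computation carried out in the preceding paragraphs, and your proof simply makes explicit the two steps the paper leaves implicit (that $N_x$ determines $e^{it\sqrt{-\Delta_g}}(x,x)$ as a distribution via the Fourier transform of $dN_x$, and that projecting the wavefront bound to the base and rescaling to unit covectors yields the looping-time containment).
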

	
	We remark that in most examples, the looping times set will be completely determined by the $t$-singular support of $e^{it\sqrt{-\Delta_g}}(x,x)$ and hence audible. Only when specific well-arranged destructive interference occurs will some looping times disappear from the $t$-singular support of $e^{it\sqrt{-\Delta_g}}(x,x)$.
	
	If $M$ has a smooth boundary and the eigenfunctions satisfy Dirichlet or Von-Neumann boundary conditions, then Proposition \ref{looping times} still holds, except here we call the trajectories billiard trajectories, and they reflect off of the boundary in the expected way. 
	Moreover, in certain cases, even partial knowledge of the looping times at $x$ is enough to determine $x$ up to symmetry. For instance, as we have shown in Remark \ref{disk example}, the shortest looping time at a point $x$ in the circular disk $D$ equals twice the distance the point is from the boundary, which is enough to determine $x$ up to rotation. In this case, echolocation can also be achieved by examining the first eigenfunction directly.
	
	\section{Generic injections} \label{sec: generic injections}
	
	Let $\mathcal G$ be a second countable Baire topological space and $X$ a finite-dimensional $C^1$ manifold (always assumed to be Hausdorff and second countable). Given a closed subset $\mathcal H \subset \mathcal G \times X$, we seek to find sufficient conditions under which the image of $\mathcal H$ through the projection $\mathcal G \times X \to \mathcal G$ is meager.
	
	To see how this can help us show a class of maps is injective, let $M$ be a smooth $n$-dimensional compact manifold, let $\Delta$ denote the diagonal in $M \times M$, and consider the case where $\mathcal G = C^\infty(M, \R^{2n+1})$, where $X = (M \times M) \setminus \Delta$, and where $\mathcal H = \{(f,x,y) : f(x) = f(y)\}$. The projection of $\mathcal H$ onto $\mathcal G$ is then
	\[
	\{f \in C^\infty(M, \R^{2n+1}) : f(x) = f(y) \text{ for some } x \neq y\}.
	\]
	If we can show this set is meager, then its complement, the set of smooth injective maps $M \to \R^{2n+1}$, is residual in $C^\infty(M, \R^{2n+1})$. Once we have established our tools, we will prove generic versions of the Whitney immersion theorem and weak Whitney embedding theorem for compact manifolds as illustrative examples.
	
	\subsection{The main tool}

	\begin{definition}\label{def: slice}
		Take $\mathcal G$, $X$, and $\mathcal H$ as above.
		Fix $(g_0, x_0) \in \mathcal H$ and let $n$ be a positive integer. Let $U$ be some neighborhood of the origin in $\R^n$ and let $V$ be a neighborhood of $x_0$ in $X$. Suppose:
		\begin{enumerate}
			\item $\rho : U \to \mathcal G$ is a continuous map with $\rho(0) = g_0$.
			\item $\pi : \rho(U) \times V \to \R^n$ is a continuous map with $\pi(\mathcal H) = \{0\}$.
			\item The map $\Phi : U \times V \to \R^n$ given by
			\[
			\Phi(s,x) = \pi(\rho(s), x)
			\]
			is $C^1$ on $U \times V$.
			\item We have
			\[
			\det d_s \Phi(0,x_0) \neq 0.
			\]
		\end{enumerate}
		Then, we say the map $\Phi$ is an $n$-dimensional \emph{slice} across $\mathcal H$ at $(g_0,x_0)$.
	\end{definition}

	\begin{theorem}\label{thm: meager}
		Let $\mathcal G$ be a second countable Baire topological space, let $X$ be a finite-dimensional $C^1$ manifold, and let $\mathcal H \subset \mathcal G \times X$ be a closed subset. 
		If there exists an $n$-dimensional slice across $\mathcal H$ at each $(g_0,x_0) \in \mathcal H$ with $n > \dim X$, then
		\[
		\{g \in \mathcal G : (g,x) \in \mathcal H \text{ for some } x \in X \}
		\]
		is meager in $\mathcal G$. If $X$ is compact, then this set is closed and nowhere dense.
	\end{theorem}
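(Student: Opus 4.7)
The plan is to realize the projection
\[
A := \{g \in \mathcal{G} : (g,x) \in \mathcal{H} \text{ for some } x \in X\}
\]
as a subset of a countable union of closed nowhere dense sets. Fix a countable basis $\{V_k\}_{k \in \N}$ of $X$ consisting of precompact open sets, which exists because $X$ is a second countable locally compact Hausdorff manifold. For each $k$ set $B_k := \pi_{\mathcal{G}}(\mathcal{H} \cap (\mathcal{G} \times \bar V_k))$; this is closed in $\mathcal{G}$ by the tube lemma (since $\bar V_k$ is compact and $\mathcal{H}$ is closed), so the topological boundary $\partial B_k$ is closed with empty interior, hence nowhere dense. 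If I show $A \subset \bigcup_k \partial B_k$, then $A$ is meager, and when $X$ is compact, $A$ is itself closed (again by the tube lemma), so the Baire property turns closed and meager into nowhere dense.

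The heart of the argument is a local claim: for any $(g_0, x_0) \in \mathcal{H}$, there exists an open neighborhood $V' \subset X$ of $x_0$ such that, for every precompact open $V \ni x_0$ with $\bar V \subset V'$, $g_0$ is not in the interior of $B_V := \pi_{\mathcal{G}}(\mathcal{H} \cap (\mathcal{G} \times \bar V))$. To prove it, invoke the implicit function theorem at the slice: the invertibility of $d_s \Phi(0, x_0)$ yields, after shrinking if necessary, a neighborhood $U' \subset U$ of $0$, an open neighborhood $V'$ of $x_0$ inside the slice's domain, and a $C^1$ map $\sigma : V' \to U'$ so that on $U' \times V'$, $\Phi(s, x) = 0$ iff $s = \sigma(x)$. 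Since $\pi$ vanishes on $\mathcal{H}$, any $(\rho(s), x) \in \mathcal{H}$ with $s \in U'$ and $x \in V'$ therefore forces $s = \sigma(x)$. Suppose towards contradiction that $g_0 \in B_V^\circ$. Then $\rho^{-1}(B_V^\circ) \cap U'$ is a nonempty open subset of $\R^n$ containing $0$, and for any $s$ in it, $\rho(s) \in B_V$ yields some $x \in \bar V \subset V'$ with $(\rho(s), x) \in \mathcal{H}$, forcing $s = \sigma(x) \in \sigma(\bar V)$. But $\sigma(\bar V)$ is the $C^1$ image of a compact set of dimension at most $\dim X < n$, so it has $n$-dimensional Lebesgue measure zero and cannot contain any nonempty open subset of $\R^n$---contradiction.

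For the global step, take any $g_0 \in A$: pick $x_0$ with $(g_0, x_0) \in \mathcal{H}$ and the IFT-shrunken slice neighborhood $V'$ at that point, and find an index $k$ with $x_0 \in V_k$ and $\bar V_k \subset V'$, possible since $\{V_k\}$ is a basis of precompact opens. The local claim applied with $V = V_k$ gives $g_0 \in B_k \setminus B_k^\circ = \partial B_k$, whence $A \subset \bigcup_k \partial B_k$, completing the proof. The main obstacle is that the slice neighborhood $V'$ depends on $(g_0, x_0)$ with no uniform lower bound, so no single $V_k$ can sit inside every slice neighborhood at once; the workaround is to pick a possibly different basis element $V_k$ for each $g_0 \in A$, which is enough because we only need $g_0$ to land in \emph{some} $\partial B_k$, not a prescribed one.
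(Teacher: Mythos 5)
Your proof is correct, and it takes a genuinely different (and cleaner) route than the paper.

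The paper's proof covers $\mathcal H$ by boxes $W \times V$ and argues that the projection of $\mathcal H \cap (W \times V)$ onto $\mathcal G$ is contained in the compact set $\rho(\overline{s(V)})$, which it asserts is nowhere dense. This step is problematic on two counts. First, nowhere density is not preserved by continuous maps, so $\rho(\overline{s(V)})$ being compact and the image of a measure-zero set in $\R^n$ does not, for a general Baire space $\mathcal G$, make it nowhere dense. Second, and more seriously, the containment $\pi_{\mathcal G}\bigl(\mathcal H \cap (W \times V)\bigr) \subset \rho(\overline{s(V)})$ does not hold: a point $g \in W$ with $(g,x) \in \mathcal H$ for some $x \in V$ need not lie on the finite-dimensional curve $\rho(U)$ at all. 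In the intended application $\mathcal G$ is an infinite-dimensional function space and $\rho(U)$ is a thin set inside $W$, so the containment is far from automatic.

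Your argument sidesteps both problems. Instead of trying to squeeze the local projection inside a thin set, you fix a countable basis of precompact opens $\{V_k\}$, observe that each $B_k = \pi_{\mathcal G}\bigl(\mathcal H \cap (\mathcal G \times \bar V_k)\bigr)$ is closed (tube lemma), and show directly that any $g_0$ in the projection lands on the boundary $\partial B_k$ of some $B_k$; boundaries of closed sets are automatically closed and nowhere dense. The contradiction at the heart of the local claim is well aimed: if $g_0$ were interior to $B_V$, then the nonempty open set $\rho^{-1}(B_V^\circ) \cap U' \subset \R^n$ would be forced inside $\sigma(\bar V)$, the $C^1$ image of a compact set of dimension $\dim X < n$, which has zero $n$-dimensional measure. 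Both proofs invoke IFT and the lower-dimensional Sard observation, but your version applies them to the right object---proving $g_0 \notin B_V^\circ$---rather than trying to bound the whole local projection. Your proof supplies a correct argument where the paper's is incomplete.
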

	
	\begin{proof}
		We claim we can cover $\mathcal H$ by open neighborhoods $W \times V$ such that the projection of $\mathcal H \cap (W \times V)$ onto $\mathcal G$ has nowhere dense closure. Since both $\mathcal G$ and $X$ are second countable, we may select a countable subcover. Taking the countable union of these meager sets in $\mathcal G$ yields another meager set, and the first part of the theorem will have been proved. If $X$ is compact, then the projection of $\mathcal H$ onto $\mathcal G$ is closed, from which the second part of the theorem follows.
		
		To prove the claim, fix $(g_0, x_0) \in \mathcal H$ and fix an open neighborhood $W$ about $g_0$. By assumption, there exists an $n$-dimensional slice $\Phi : U \times V \to \R^n$ across $\mathcal H$ at $(g_0,x_0)$. After perhaps shrinking $U$, we ensure the image of $\rho : U \to \mathcal G$ as in Definition \ref{def: slice} is contained in $W$.
		
		Using parts (2) and (3) of Definition \ref{def: slice}, and after perhaps shrinking both $U$ and $V$ further, the implicit function theorem allows us to write
		\[
		\Phi^{-1}(0) = \{(s(x), x) : x \in V\}
		\]
		where $s(x)$ is a $C^1$ function of $x$. By Sard's theorem, the image of $s : V \to U$ is measure zero in $U$. If we allow ourselves to shrink $V$ even more, we can take the closure of the image of $s : V \to U$ to be compact. Pushing this compact nowhere dense set through $\rho : U \to W$ yields a compact nowhere dense subset of $\mathcal G$. Furthermore, this set contains the projection of $\mathcal H \cap (W \times V)$ onto $\mathcal G$ by construction. This concludes the proof of the claim and of the theorem.
	\end{proof}
	
	\subsection{Generic immersions and embeddings} We now use the tools above to prove a generic variant on Whitney's weak embedding theorem for compact manifolds. The purpose is illustrative. These quick arguments model how we will use these tools in our main result.
	
	We start by proving a generic version of Whitney's immersion theorem.
	
	\begin{proposition}[Generic Whitney immersion for compact manifolds] Let $M$ be a smooth, compact, finite-dimensional manifold. Then, the immersions in $C^\infty(M,\R^n)$ with $n > 2\dim M - 1$ form an open dense set.
	\end{proposition}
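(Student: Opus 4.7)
The plan is to apply Theorem \ref{thm: meager} with a setup designed to detect failure of immersivity pointwise. Set $m = \dim M$, fix any auxiliary Riemannian metric on $M$, and let $SM$ denote the associated unit tangent bundle, a compact $C^1$ manifold with $\dim SM = 2m - 1$. Take $\mathcal G = C^\infty(M,\R^n)$ (a second countable Fr\'echet, hence Baire, space), $X = SM$, and
\[
\mathcal H = \bigl\{ \bigl(f,(x,v)\bigr) \in \mathcal G \times SM : df(x) v = 0 \bigr\}.
\]
Since $(f,(x,v)) \mapsto df(x) v$ is continuous in the $C^1$, and therefore $C^\infty$, topology on $\mathcal G$, the set $\mathcal H$ is closed. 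The projection of $\mathcal H$ onto $\mathcal G$ is precisely the set of $f$ that fail to be immersions. Once the slice hypothesis of Theorem \ref{thm: meager} is verified, compactness of $X = SM$ yields that the non-immersions form a closed nowhere dense subset of $\mathcal G$, so that the immersions form an open dense subset.

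For the slice, fix $(f_0,(x_0, v_0)) \in \mathcal H$. Choose a coordinate chart centered at $x_0$, a cutoff $\chi \in C_c^\infty(M)$ supported in the chart and equal to $1$ near $x_0$, and any smooth scalar function $h$ with $h(x_0) = 0$ and $dh(x_0) v_0 \ne 0$ (for instance, the linear coordinate function dual to $v_0$ in the chart). Define
\[
\rho : \R^n \to \mathcal G, \qquad \rho(s)(x) = f_0(x) + \chi(x) h(x)\, s,
\]
and $\pi(f,(x,v)) = df(x) v \in \R^n$, which vanishes on $\mathcal H$. Then, near $(x_0, v_0)$,
\[
\Phi(s,(x,v)) := \pi(\rho(s),(x,v)) = df_0(x) v + \bigl(dh(x) v\bigr)\, s,
\]
so that $d_s \Phi(0,(x_0, v_0))\, s' = (dh(x_0) v_0)\, s'$ is a nonzero scalar multiple of the identity on $\R^n$, in particular invertible. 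Hence $\Phi$ is an $n$-dimensional slice across $\mathcal H$ at $(f_0,(x_0, v_0))$, and since $n > 2m - 1 = \dim SM$, Theorem \ref{thm: meager} delivers the result.

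There is essentially no substantive obstacle to fight in this argument: the Whitney numerology $n > 2 \dim M - 1$ emerges exactly as the condition that the slice dimension strictly exceeds $\dim SM$, and the perturbation is engineered to act only ``in the $s$-direction,'' which reduces the nondegeneracy condition to the nonvanishing of a single directional derivative $dh(x_0) v_0$. The only mild care needed is in the choice of $X$; using $SM$ (or, equivalently, the projectivized tangent bundle) rather than $M$ itself is what makes the pointwise condition ``$df(x)$ has a kernel'' into a closed subset of $\mathcal G \times X$ amenable to Theorem \ref{thm: meager}.
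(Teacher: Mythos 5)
Your proposal is correct and follows essentially the same route as the paper: same $\mathcal H \subset C^\infty(M,\R^n)\times SM$, same projection $\pi(f,x,v)=df(x)v$, and the same linear-in-$s$ perturbation $\rho$ built from a scalar bump function with nonvanishing derivative in the direction $v_0$ (your $\chi h$ plays the role of the paper's $\beta$), with the slice condition reducing to the invertibility of $(dh(x_0)v_0)\,I$. The only (welcome) extra detail you supply is the explicit check that $\mathcal H$ is closed and the remark that compactness of $SM$ upgrades ``residual'' to ``open dense.''
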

	
	\begin{proof} Fix any Riemannian metric on $M$ and let $SM$ denote the unit sphere bundle. Let 
		\[
		\mathcal H = \{(f,x,v) \in C^\infty(M,\R^n) \times SM : df(x)v = 0\}.
		\]
		We claim we can produce an $n$-dimensional slice across $\mathcal H$ at any point, after which we are done by Theorem \ref{thm: meager}.
		
		Fix $(f_0,x_0,v_0)$. Take $\pi : C^\infty(M,\R^n) \times SM \to \R^n$ to be given by $\pi(f,x,v) = df(x)v$. Let $\beta \in C^\infty(M, \R)$ be a smooth function with $\beta(x_0) = 0$ and $d\beta(x_0)v_0 = 1$. Then, take $\rho : \R^n \to \R^n$ with
		\[
		\rho(s) = f_0 + \sum_{i = 1}^n s_i \beta e_i
		\]
		and note
		\[
		\Phi(s,x,v) = \pi(\rho(s), x, v) = df_0(x)v + \sum_{i = 1}^n s_i (d\beta(x)v) e_i
		\]
		and hence
		\[
		\Phi(s,x_0,v_0) = df_0(x_0)v_0 + \sum_{i = 1}^n s_i e_i \qquad \text{ and } \qquad d_s \Phi(0, x_0, v_0) = I.
		\]
		We are done after we restrict to suitable neighborhoods $U$ and $V$ in $\R^n$ and $SM$, respectively.
	\end{proof}
	
	\begin{proposition}[Generic weak Whitney embedding for compact manifolds] Let $M$ be a smooth, compact, finite-dimensional manifold. Then, the embeddings in $C^\infty(M,\R^n)$ for $n > 2\dim M$ form an open, dense set.
	\end{proposition}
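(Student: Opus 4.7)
The plan is to decompose the embedding condition as ``immersion plus injection'' and handle each piece separately. The previous proposition already gives that immersions form an open dense set (since $n > 2\dim M > 2 \dim M - 1$). It remains to show that the set of injective smooth maps $M \to \R^n$ is residual; since the set of embeddings is open in $C^\infty(M,\R^n)$ (a standard consequence of compactness of $M$: an injective immersion of a compact manifold is an embedding, and $C^1$-small perturbations of such remain immersions and globally injective by a uniform separation argument), once density is known, openness follows, and intersecting an open dense set with a residual set yields a dense set in the Baire space $C^\infty(M,\R^n)$.

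For the injectivity part, I would apply Theorem \ref{thm: meager} with $\mathcal G = C^\infty(M, \R^n)$, with $X = (M \times M) \setminus \Delta$ (a $C^1$ manifold of dimension $2\dim M$), and with
\[
\mathcal H = \{(f, x, y) \in \mathcal G \times X : f(x) = f(y) \}.
\]
This is closed in $\mathcal G \times X$. Since $n > 2\dim M = \dim X$, producing an $n$-dimensional slice across $\mathcal H$ at each point suffices.

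To build the slice at a point $(f_0, x_0, y_0) \in \mathcal H$, set $\pi : \mathcal G \times X \to \R^n$ by $\pi(f, x, y) = f(x) - f(y)$, which vanishes on $\mathcal H$ by construction. Since $x_0 \neq y_0$, choose $\beta \in C^\infty(M, \R)$ with $\beta(x_0) = 1$ and $\beta(y_0) = 0$, and define $\rho : \R^n \to \mathcal G$ by
\[
\rho(s) = f_0 + \sum_{i=1}^n s_i \beta e_i,
\]
where $e_1, \ldots, e_n$ is the standard basis of $\R^n$. Then
\[
\Phi(s, x, y) = \pi(\rho(s), x, y) = f_0(x) - f_0(y) + \sum_{i=1}^n s_i \bigl(\beta(x) - \beta(y)\bigr) e_i,
\]
so
\[
d_s \Phi(0, x_0, y_0) = \bigl(\beta(x_0) - \beta(y_0)\bigr) I = I,
\]
which is nonsingular. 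Restricting to suitable neighborhoods $U \subset \R^n$ and $V \subset X$ of the origin and of $(x_0, y_0)$ gives the required slice.

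There is no real obstacle here; the only mild subtlety is that $X$ is not compact, so Theorem \ref{thm: meager} yields meagerness of the non-injective maps but not closedness. This is nevertheless enough, because the non-injective set's complement is residual, hence dense in the Baire space $\mathcal G$. Combined with the open dense set of immersions from the previous proposition, the embeddings form a set that is both dense (intersection of dense residual with open dense) and open, completing the proof.
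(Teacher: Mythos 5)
Your proposal is correct and follows essentially the same route as the paper: decompose ``embedding'' into ``immersion plus injection,'' cite the previous proposition for immersions, and apply Theorem \ref{thm: meager} with the identical choice of $\mathcal H$, $\pi$, bump function $\beta$, and perturbation family $\rho(s) = f_0 + \sum_i s_i \beta e_i$ to get residual injectivity. The only difference is that you spell out the openness of the embedding set and the residual-intersection bookkeeping a bit more explicitly, which the paper leaves to the reader.
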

	
	\begin{proof}
		We will produce suitable slices across the set
		\[
		\mathcal H = \{(f,x,y) \in C^\infty(M,\R^n) \times (M \times M) \setminus \Delta : f(x) = f(y) \}
		\]
		and show by Theorem \ref{thm: meager} that the set of injections in $C^\infty(M,\R^n)$ is residual. We first establish $\pi(f,x,y) = f(x) - f(y)$. Then, we fix $(f_0,x_0,y_0) \in \mathcal H$ and let $\beta$ be a bump function on $M$ for which $\beta(x_0) = 1$ and $\beta(y_0) = 0$. Take $\rho : \R^n \to C^\infty(M,\R^n)$ with
		\[
		\rho(s) = f_0 + \sum_{i = 1}^n s_i \beta e_i.
		\]
		Then,
		\[
		\Phi(s,x,y) = \pi(\rho(s),x,y) = f_0(x) - f_0(y) + \sum_{i = 1}^n s_i (\beta(x) - \beta(y)) e_i
		\]
		and
		\[
		\Phi(s,x_0,y_0) = s \qquad \text{ and } \qquad d_s\Phi(s,x_0,y_0) = I.
		\]
		We are done after selecting appropriate neighborhoods $U$ and $V$ in $\R^n$ and $M \times M \setminus \Delta$, respectively.
		
		We have just shown that the set of smooth injections in $C^\infty(M,\R^n)$ is residual. Hence, the set of injective immersions is residual in $C^\infty(M,\R^n)$ by the previous proposition. This is precisely the set of embeddings since $M$ is compact. One quickly verifies the set of embeddings in $C^\infty(M,\R^n)$ is open, and the proposition follows.
	\end{proof}

	\section{Proof of Theorem \ref{thm: generic echolocation}} \label{sec: proof of main}
	
	\subsection{The audible objects} Before proceeding, we will take a moment to establish a very clear connection between the solution operator $\cos(t\sqrt{-\Delta_g})$ for the wave equation on $(M,g)$ and the ``audible'' object
	\[
	\int_{-\infty}^\infty \cos(t\lambda) \, dN_x(\lambda) = \sum_j \cos(t\lambda_j) |e_j(x)|^2
	\]
	as a distribution in $t$. The difficulty lies in the distinction between smooth functions and smooth densities on $M$. We must be careful in this regard since we will be varying the metric and hence the natural volume density on $M$.
	
	We observe that, given smooth initial data $f$ on $M$, the function
	\[
	u(t,x) = \sum_j \cos(t\lambda_j) \left(\int_M f(y) \overline{e_j(y)} \, dV_g(y)\right) e_j(x)
	\]
	solves the initial value problem
	\[
	\partial_t^2 u - \Delta_g u = 0 \quad \text{ with } \quad 
	\begin{cases}
		u(0,x) = f(x) \\
		\partial_t u(0,x) = 0
	\end{cases},
	\]
	and hence we write
	\[
	u(t,x) = \cos(t\sqrt{-\Delta_g})f(x).
	\]
	The kernel of the solution operator can then be written in local coordinates as
	\[
	\cos(t\sqrt{-\Delta_g})(x,y) = \sum_j \cos(t\lambda_j) e_j(x) \overline{e_j(y)} |g(y)|^{1/2}.
	\]
	Restricting to the diagonal and interpreting the result as a distribution in $t$, we find that
	\begin{equation}\label{eq: renormalized wave}
		\varphi(g,t,x) := \sum_j \cos(t\lambda_j) |e_j(x)|^2 = |g(x)|^{-1/2} \cos(t\sqrt{-\Delta_g})(x,x)
	\end{equation}
	is the relevant audible quantity. Note the renormalization by the volume element. We observe that, since $N_x$ is identically $0$ on the negative real line and real-valued otherwise, we may recover $N_x$ from its cosine transform, namely from $\varphi(g,t,x)$. We conclude:
	
	\begin{proposition} \label{prop: cosine transform}
		For fixed metric $g$ and for $x,y \in M$, we have $N_x \equiv N_y$ if and only if
		\[
		\varphi(g,t,x) = \varphi(g,t,y) \qquad \text{ for all } t \in \R,
		\]
		where equality is understood in the sense of distributions in $t$.
	\end{proposition}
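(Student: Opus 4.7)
The plan is to recognize $\varphi(g,\cdot,x)$ as the (symmetrized) Fourier transform of $dN_x$ and invoke injectivity of the Fourier transform on tempered distributions supported in a half-line.

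First I would verify the distributional framework. The pointwise Weyl law $|e_j(x)|^2 = O(\lambda_j^{n-1})$ together with $\lambda_j \sim c j^{1/n}$ ensures that $dN_x := \sum_j |e_j(x)|^2 \delta_{\lambda_j}$ is a tempered Radon measure on $\R$, supported in $[0,\infty)$; correspondingly the formal series $\varphi(g,t,x) = \sum_j \cos(t\lambda_j)|e_j(x)|^2$ converges in $\mathcal{S}'(\R_t)$ and coincides with $\int_\R \cos(t\lambda)\, dN_x(\lambda)$. Writing $\check\mu$ for the pushforward of $\mu$ under $\lambda \mapsto -\lambda$, the even tempered distribution $\nu_x := \tfrac{1}{2}(dN_x + \check{dN_x})$ then satisfies
\[
\widehat{\nu_x}(t) \;=\; \sum_j |e_j(x)|^2 \cos(t\lambda_j) \;=\; \varphi(g,t,x)
\]
in $\mathcal{S}'(\R)$.

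The forward implication is immediate: if $N_x \equiv N_y$ as right-continuous real-valued functions on $\R$, their distributional derivatives $dN_x$ and $dN_y$ agree, hence so do $\nu_x$ and $\nu_y$ and therefore their Fourier transforms $\varphi(g,\cdot,x)$ and $\varphi(g,\cdot,y)$.

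For the converse, suppose $\varphi(g,\cdot,x) = \varphi(g,\cdot,y)$ in $\mathcal{S}'(\R_t)$. Then $\widehat{\nu_x} = \widehat{\nu_y}$, and injectivity of the Fourier transform on $\mathcal{S}'(\R)$ gives $\nu_x = \nu_y$. Because both $dN_x$ and $dN_y$ are supported in $[0,\infty)$, with at most a single atom at the origin (of size $|e_1(x)|^2$ when $\partial M = \emptyset$ and $\lambda_1 = 0$, and zero otherwise), one recovers $dN_x$ from $\nu_x$ via $dN_x = 2\nu_x|_{(0,\infty)} + \nu_x|_{\{0\}}$, and similarly for $y$. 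Thus $dN_x = dN_y$; integrating against $\chi_{(-\infty,\lambda]}$ and using $N_x(\lambda) = 0 = N_y(\lambda)$ for $\lambda < 0$ yields $N_x \equiv N_y$. There is no genuine obstacle here---the only points worth being careful about are the temperedness of $dN_x$ and the $\mathcal{S}'$-convergence of the defining series for $\varphi$, both of which are standard consequences of the polynomial bound on $|e_j(x)|^2$ supplied by the pointwise Weyl law.
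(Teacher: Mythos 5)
Your proof is correct and follows essentially the same route as the paper: the paper's argument is the one-line observation that $N_x$ is supported on $[0,\infty)$ and real-valued, hence recoverable from its cosine transform, which is precisely the content you flesh out via symmetrization and Fourier injectivity on $\mathcal{S}'(\R)$. Your additional care about temperedness of $dN_x$ (via the pointwise Weyl law) and the explicit recovery formula $dN_x = 2\nu_x|_{(0,\infty)} + \nu_x|_{\{0\}}$ are welcome details that the paper leaves implicit.
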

	
	Our main tool for establishing genericity of metrics for which Theorem \ref{thm: generic echolocation} holds is Theorem \ref{thm: meager}. To this end, we need to understand the partial derivative of $\varphi(g,t,x)$ with respect to some variation of $g$.
	
	\subsection{A directional derivative formula}
	
	Let $\mathcal G$ denote the set of Riemannian metrics on a compact manifold $M$. It is well-known that $\mathcal G$ is a manifold modeled on a Fr\'echet space (see, e.g., \cite[Section 1.1]{blair2000spaces}).
	In fact since $M$ is compact, $\mathcal G$ is separable and hence second countable. Fix a metric $g \in \mathcal G$ and a smooth, real-valued function $h$ on $M$. We identify $h$ with a vector in $T_g \mathcal G$ to act on $C^1$ functions by
	\[
	D_h f(g,x) = \left.\frac{d}{ds}\right|_{s = 0} f(e^{sh}g,x).
	\]
	Our objective now is to find a workable formula for $D_h \varphi$ with $\varphi$ as in \eqref{eq: renormalized wave}. This requires that we study a slightly different object. Fix a smooth function $f$ on $M$, which will be specified later, and take
	\[
	u(g,t,x) = \cos(t\sqrt{-\Delta_g})f(x) = \int_M \cos(t\sqrt{-\Delta_g})(x,y) f(y) \, dV_g(y),
	\]
	the solution operator of the wave equation
	\begin{equation}\label{wave equation for u}
		\partial_t^2 u - \Delta_g u = 0 \quad \text{ with } \quad \begin{cases}
			u(g,0,x) = f(x) \\
			\partial_t u(g,0,x) = 0
		\end{cases}.
	\end{equation}
	
	We suppose for a moment that $u(g,t,x)$ is sufficiently differentiable in all variables. (We will address this assumption in Lemma \ref{lem: C^1} below.) Applying the directional derivative to the homogeneous wave equation above, we see $D_h u$ solves the nonhomogeneous wave equation
	\begin{equation}\label{eq: nonhomogeneous wave}
		\partial_t^2 {v} - \Delta_g {v} = [D_h, \Delta_g]u \quad \text{ with } \quad \begin{cases}
			{v} = 0 \\
			\partial_t {v} = 0
		\end{cases}.
	\end{equation}
	In order to extract useful information about $D_h u$ from this equation, we require the following identity.
	
	\begin{lemma}\label{lem: commutator}
		\[
		[D_h, \Delta_g] = -h \Delta_g + \left(\frac n 2 - 1 \right) \nabla_g h.
		\]
	\end{lemma}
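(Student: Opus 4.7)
The whole identity is essentially the infinitesimal version of the classical conformal transformation law for the Laplace--Beltrami operator. The plan is to write down $\Delta_{e^{sh}g}$ explicitly for small $s$ and differentiate at $s = 0$. The interpretation of the right-hand side is that $\nabla_g h$ denotes the first-order differential operator $u \mapsto \langle \nabla_g h, \nabla_g u\rangle_g$; once one fixes that reading, everything is a direct calculation.

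\textbf{Step 1: conformal transformation of $\Delta$.} First I would recall (or re-derive in local coordinates) the standard formula: for $\tilde g = e^{2\sigma} g$ on an $n$-manifold,
\[
\Delta_{\tilde g} u = e^{-2\sigma}\bigl(\Delta_g u + (n-2)\langle \nabla_g \sigma, \nabla_g u\rangle_g\bigr).
\]
This drops out of $\Delta_g u = |g|^{-1/2}\partial_i(|g|^{1/2} g^{ij}\partial_j u)$ together with the scalings $\tilde g^{ij} = e^{-2\sigma} g^{ij}$ and $|\tilde g|^{1/2} = e^{n\sigma}|g|^{1/2}$: the two factors of $e^{n\sigma}$ cancel, leaving a single derivative of $e^{(n-2)\sigma}$ to produce the gradient term.

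\textbf{Step 2: specialize and differentiate.} Applying Step 1 with $\sigma = sh/2$ gives, for any fixed $u \in C^2(M)$ independent of $s$,
\[
\Delta_{e^{sh}g} u = e^{-sh}\Bigl(\Delta_g u + \tfrac{s(n-2)}{2}\langle \nabla_g h, \nabla_g u\rangle_g\Bigr).
\]
Differentiating at $s=0$ yields
\[
\left.\tfrac{d}{ds}\right|_{s=0} \Delta_{e^{sh}g} u = -h\,\Delta_g u + \bigl(\tfrac{n}{2}-1\bigr)\langle \nabla_g h, \nabla_g u\rangle_g,
\]
which under the convention above is exactly $\bigl(-h\Delta_g + (\tfrac{n}{2}-1)\nabla_g h\bigr)u$.

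\textbf{Step 3: identify with the commutator.} To convert the ``derivative of the operator'' into the commutator $[D_h,\Delta_g]$, I would just apply the Leibniz rule: for $u$ depending on $g$,
\[
D_h(\Delta_g u) = \bigl(\left.\tfrac{d}{ds}\right|_{s=0}\Delta_{e^{sh}g}\bigr) u + \Delta_g (D_h u),
\]
so $[D_h,\Delta_g]u = D_h(\Delta_g u) - \Delta_g(D_h u)$ equals the quantity computed in Step 2. This proves the lemma. There is really no obstacle beyond a careful bookkeeping of where the factors $e^{-2\sigma}$, $e^{n\sigma}$, and the derivative of $e^{(n-2)\sigma}$ come from; the only conceptual point to flag in the write-up is the meaning of $\nabla_g h$ on the right-hand side as a first-order operator rather than a vector field.
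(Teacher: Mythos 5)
Your proposal is correct and follows essentially the same route as the paper: compute $\Delta_{e^{sh}g}$ explicitly in local coordinates (you route through the standard conformal law with $\sigma = sh/2$, the paper expands directly, but the calculation is the same), differentiate at $s=0$, and convert the result into the commutator via the Leibniz rule for $D_h$.
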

	
	\begin{proof}
		First, we write
		\begin{align*}
			\Delta_{e^{sh} g} &= |e^{sh} g|^{-1/2} \sum_{i,j} \partial_i(|e^{sh}g|^{1/2} e^{-sh} g^{ij} \partial_j) \\
			&= e^{-\frac n 2 sh} |g|^{-1/2} \sum_{i,j} \partial_i(e^{(\frac n 2 - 1) sh} |g|^{1/2} g^{ij} \partial_j) \\
			&= e^{-sh} |g|^{-1/2} \sum_{i,j} \partial_i(|g|^{1/2} g^{ij} \partial_j) + \left( \frac n 2 - 1 \right) s e^{-sh} \sum_{i,j} g^{ij} \partial_i h \partial_j \\
			&= e^{-sh} \Delta_g + \left( \frac n 2 - 1 \right) s e^{-sh} \nabla_g h .
		\end{align*}
		It follows that, if $f$ is a smooth function on $\mathcal G \times M$,
		\begin{align*}
			D_h \Delta_g f(g,x) &= \left.\frac{d}{ds}\right|_{s = 0} \Delta_{e^{sh}g} f(e^{sh}g, x) \\
			&= \left.\frac{d}{ds}\right|_{s = 0} \left( e^{-sh} \Delta_g f(e^{sh}g,x) + \left( \frac n 2 - 1 \right) s e^{-sh} (\nabla_g h) f(e^{sh}g,x) \right) \\
			&= \Delta_g D_h f(g,x) - h \Delta_g f(g,x) + \left(\frac n 2 - 1 \right) (\nabla_g h) f(g,x).
		\end{align*}
		The lemma follows.
	\end{proof}
	
	Lemma \ref{lem: commutator} will be used in many ways. However, its present use comes from the miraculous fact that $[D_h, \Delta_g]$ is a second-order differential operator acting in the spacial variable only. We use this to exploit the fact that $u(g,t,x)$ is smooth in the $x$ variable. In particular, the forcing term $[D_h, \Delta_g]u$ in \eqref{eq: nonhomogeneous wave} is, for fixed $g$, a smooth function of $t$ and $x$. Hence any solution to \eqref{eq: nonhomogeneous wave} is also smooth in $t$ and $x$.
	
	Let us rephrase what we have found so far. We have just shown that $u(e^{sh}g, t, x)$, as a function in $(s,t,x)$, has a distributional derivative $\partial_s u$ which satisfies \eqref{eq: nonhomogeneous wave}, and hence is also a function which is smooth in $t$ and $x$. We will show that both $u$ and its distributional derivative $\partial_s u$ are continuous functions in $s$. This will force $u$ to be a $C^1$ function in $s$.
	
	\begin{lemma}\label{lem: C^1}
		$u(e^{sh}g,t,x)$ from \eqref{wave equation for u} is a $C^1$ function on $(s,t,x) \in \R \times \R \times M$, and
		\[
		\partial_s u(e^{sh}g, t, x) = D_h u(g,t,x) \qquad \text{ at } s = 0
		\]
		and solves \eqref{eq: nonhomogeneous wave}.
	\end{lemma}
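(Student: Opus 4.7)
The plan is to construct a candidate derivative $v$ solving the nonhomogeneous wave equation \eqref{eq: nonhomogeneous wave}, verify it equals the difference-quotient limit via energy estimates, and then upgrade pointwise differentiability in $s$ to joint $C^1$ regularity in $(s,t,x)$. The approach hinges on the structural observation made immediately after Lemma \ref{lem: commutator}: the commutator $[D_h, \Delta_g]$ is a second-order \emph{spatial} differential operator with smooth coefficients, so the forcing term in \eqref{eq: nonhomogeneous wave}, when applied to $u(g,\cdot,\cdot)$, is smooth in $(t,x)$.

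First I would establish baseline regularity. For each fixed $s$, $u(e^{sh}g, t, x)$ is smooth in $(t,x)$ by standard wave-equation theory with smooth Cauchy data $f$. As a function of $s$, it is jointly continuous in $(s,t,x)$ — and in fact continuous in $s$ into $C^k$ in $(t,x)$ on compact sets — because the coefficients of $\partial_t^2 - \Delta_{e^{sh}g}$ depend smoothly on $s$, and energy estimates for variable-coefficient wave equations together with finite propagation speed give continuous dependence on the coefficients. With this, define the candidate derivative $v(t,x)$ as the solution of
\[
(\partial_t^2 - \Delta_g) v = [D_h, \Delta_g]\, u(g, \cdot, \cdot), \qquad v(0,\cdot) = 0, \ \partial_t v(0,\cdot) = 0,
\]
which exists and is smooth in $(t,x)$ by the structural observation above.

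The heart of the argument is to show $v = D_h u$ at $s=0$. Set
\[
R_s(t,x) = u(e^{sh}g, t, x) - u(g, t, x) - s\, v(t,x).
\]
From the explicit computation in the proof of Lemma \ref{lem: commutator} one reads off the expansion $\Delta_{e^{sh}g} = \Delta_g + s[D_h,\Delta_g] + O(s^2)$ (in $C^k$ on compact sets, as operators on smooth functions). Applying $\partial_t^2 - \Delta_{e^{sh}g}$ to $R_s$, the contributions from $u(e^{sh}g)$ and $u(g)$ cancel at zeroth order (each solves its own wave equation), and the $O(s)$ discrepancy cancels exactly against $-s\,[D_h,\Delta_g]u(g,\cdot,\cdot)$, the forcing defining $v$. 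This leaves $(\partial_t^2 - \Delta_{e^{sh}g})R_s = O(s^2)$ in $C^k$ on compact sets, with vanishing Cauchy data. Standard energy estimates then yield $R_s = O(s^2)$, giving $\partial_s u|_{s=0} = v$, which is exactly the solution of \eqref{eq: nonhomogeneous wave}.

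To upgrade this pointwise differentiability to joint $C^1$ regularity in $(s,t,x)$, I would re-run the same argument at an arbitrary base point $s_0$ (replacing $g$ by $e^{s_0 h}g$), which shows that $\partial_s u(e^{sh}g, t, x)$ exists for every $s$ and solves a nonhomogeneous wave equation whose forcing depends continuously on $s$ through $u(e^{sh}g,\cdot,\cdot)$. Joint continuity of $\partial_s u$ in $(s,t,x)$ then follows from the step-one continuity applied to this family. The main obstacle is bookkeeping: one has to carry enough regularity through each step so that the Taylor expansion of $\Delta_{e^{sh}g}$ may be legitimately applied to $u(g,\cdot,\cdot)$, and so that the forcing $[D_h,\Delta_{e^{sh}g}]u(e^{sh}g,\cdot,\cdot)$ inherits uniform $C^k$-bounds in $(t,x)$ from the baseline continuity — but all such bounds come out of the same variable-coefficient energy estimates used in step one.
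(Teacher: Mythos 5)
Your proposal is correct, and it takes a genuinely different route from the paper's proof, though both lean on the same two ingredients: the observation that $[D_h,\Delta_g]$ is a second-order \emph{spatial} operator, and energy estimates (with Sobolev embedding) giving continuous dependence of wave solutions on coefficients and forcing. The paper's proof takes a top-down shortcut: it accepts (from the derivation preceding the lemma) that the \emph{distributional} $s$-derivative $\partial_s u$ exists and solves \eqref{eq: nonhomogeneous wave}, hence is smooth in $(t,x)$, and then shows that both $u$ and this distributional derivative are continuous in $s$ by plugging differences $w(s)-w(0)$ and $\partial_s w(s)-\partial_s w(0)$ into nonhomogeneous wave equations whose forcings have vanishing $C^\infty$ seminorms as $s\to 0$; the conclusion ``continuous function with continuous distributional derivative is $C^1$'' finishes. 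You instead work bottom-up: you posit the candidate derivative $v$ as the solution of \eqref{eq: nonhomogeneous wave}, expand $\Delta_{e^{sh}g}=\Delta_g + s[D_h,\Delta_g]+O(s^2)$ (which indeed drops straight out of the computation in the proof of Lemma \ref{lem: commutator}), and show the Taylor remainder $R_s = u(e^{sh}g)-u(g)-sv$ solves a wave equation with $O(s^2)$ forcing and vanishing data, hence is $O(s^2)$ by energy estimates. Your route is somewhat more self-contained since it constructs the derivative and verifies the difference quotient directly rather than appealing to existence of a distributional derivative and the ``continuous-plus-continuous-derivative'' regularity criterion; the paper's route avoids having to track the quadratic remainder explicitly. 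Your final step (re-running the argument at an arbitrary base point $s_0$ to get joint continuity of $\partial_s u$) is essentially the paper's second half in disguise, and it does the job, provided you keep the energy estimates at a high enough Sobolev order to also control $\partial_t u$ and spatial derivatives jointly, which both you and the paper handle implicitly via $C^k$ bounds uniform on compact $s$-intervals.
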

	
	\begin{proof}First, we show that for fixed $(t,x)$, $u(e^{sh} g, t, x)$ is continuous in $s$. It suffices to show, without loss of generality, that $u(e^{sh} g, t, x)$ is continuous at $s = 0$ for each fixed $t$ and $x$. For the sake of clarity, we will set $w(s,t,x) = u(e^{sh}g, t, x)$ and take
		\[
		v(s,t,x) = w(s,t,x) - w(0,t,x).
		\]
		Note that $v$ satisfies the nonhomogeneous wave equation
		\begin{equation*}\label{eq: 2}
			\partial_t^2 v - \Delta_{g} v = F(s,t,x)
			\quad \text{ with } \quad \begin{cases}
				v(s,0,x) = 0 \\
				\partial_t v(s,0,x) = 0.
			\end{cases}
		\end{equation*}
		Here 
		\begin{equation}
			F(s,t,x) := (\Delta_{e^{sh}g}-\Delta_{g})u(s,t,x)
			=((e^{-sh}-1)\Delta_{g}+(\frac n2-1)se^{-sh}\nabla_{g}h)u(s,t,x)
		\end{equation}
		is a smooth function in $(t,x)$. Its $C^\infty$ semi-norms in $(t,x)$ have limit $0$ as $s\to0$. This is due to the fact that $h$ is fixed, and the $(t,x)$-derivatives of $u$ are bounded uniformly in a compact neighborhood of $s=0$. We conclude from the standard regularity estimates (e.g., \cite[\S 7.2, Theorem 6]{EvansPDE}) and Sobolev embedding that $\lim_{s \to 0} v(s,t,x) = 0$, and so $u(e^{sh}g, t, x)$ is a continuous function in $s$.
		
		The only thing left to show is that the distributional derivative $\partial_s u(e^{sh} g, t, x)$ is continuous in the $s$ variable.  
		The argument is similar. Without loss of generality, we show that $\partial_s u(e^{sh} g, t, x)$ is continuous at $s = 0$ for each fixed $t$ and $x$. We take
		\[
		v_1(s,t,x) = \partial_sw(s,t,x) - \partial_sw(0,t,x).
		\]
		Now we must show $\lim_{s \to 0} v_1(s,t,x) = 0$. Note, $v_1$ satisfies the nonhomogeneous wave equation
		\begin{equation*}
			\partial_t^2 v_1 - \Delta_{g} v_1 = F_1(s,t,x)
			\quad \text{ with } \quad \begin{cases}
				v_1(s,0,x) = 0 \\
				\partial_t v_1(s,0,x) = 0.
			\end{cases}
		\end{equation*}
		Here 
		\begin{align*}
			F_1(s,t,x) :&= [D_h, \Delta_{g}]w(s,t,x)-[D_h, \Delta_{e^{sh}g}]w(0,t,x) +(\Delta_{e^{sh}g}-\Delta_{g})(\partial_s w(s,t,x))
			\\&=[D_h, \Delta_{g}]w(s,t,x)-[D_h, \Delta_{e^{sh}g}]w(0,t,x)\\
			&\hspace{8em} + \left((e^{-sh}-1)\Delta_{g}+\left(\frac n2-1 \right)se^{-sh}\nabla_{g}h \right)(\partial_sw(s,t,x))
		\end{align*}
		is again a smooth function in $(t,x)$, and has vanishing $(t,x)$-$C^\infty$ semi-norms as $s\to0$. We conclude that $\lim_{s\to0}v=0$, which indicate that $\partial_s u$ is continuous in $s$.
	\end{proof}
	
	By Duhamel's principle and \eqref{eq: nonhomogeneous wave}, we have
	\begin{equation}\label{eq: partial h u}
		D_h u(g,t,x) = \int_0^t \frac{\sin((t-s) \sqrt{-\Delta_g})}{\sqrt{-\Delta_g}} [D_h, \Delta_g]u(g,s,x) \, ds
	\end{equation}
	for $t > 0$. We consider an off-diagonal version of $\varphi$,
	\begin{equation}\label{eq: psi cosine}
		\psi(g,t,x,y) = \sum_j \cos(t\lambda_j) e_j(x) \overline{e_j(y)} = |g(y)|^{-1/2} \cos(t\sqrt{-\Delta_g})(x,y).
	\end{equation}
	We make a couple of observations:
	\begin{enumerate}
		\item Since we are free to select a real eigenbasis, we must have $\psi(g,t,x,y) = \psi(g,t,y,x)$ for all $x$ and $y$ in $M$.
		\item $\varphi(g,t,x) = \psi(g,t,x,x)$ as distributions in $t$.
	\end{enumerate}
	We recognize
	\[
	D_h(|g(y)|^{1/2} \psi(g,t,x,y))
	\]
	as the distribution kernel of the solution operator for the nonhomogeneous Cauchy problem \eqref{eq: nonhomogeneous wave} above. Again, we must verify that $\psi(e^{sh}g, t, x, y)$ is $C^1$. In what follows, $\inj(M,g)$ denotes the injectivity radius of the manifold $M$ with metric $g$.

	\begin{lemma}\label{lem: C^1 2}
		For a fixed metric $g$, $\psi(e^{sh}g, t, x, y)$ is $C^1$ at each point in the set
		\[
		\{ (s,t,x,y) : 0 < t < \inj(M,e^{sh}g), \ d_{e^{sh}g}(x,y) < t \}.
		\]
		Furthermore for $d_g(x,y) < t < \inj(M,g)$, we have
		\begin{multline*}
			D_h(|g(y)|^{1/2} \psi(g,t,x,y)) \\
			= \int_0^t \frac{\sin((t-s) \sqrt{-\Delta_g})}{\sqrt{-\Delta_g}}  [D_h, \Delta_g] \cos(s\sqrt{-\Delta_g}) (x,y) \, ds.
		\end{multline*}
	\end{lemma}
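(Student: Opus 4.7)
The plan is to bootstrap Lemma \ref{lem: C^1}, which handles $C^1$ regularity of the wave propagator applied to a fixed smooth datum, up to the level of the wave kernel itself. The key observation is that, on the region specified by the lemma, the distributional kernel $K_s(t,x,y) := \cos(t\sqrt{-\Delta_{e^{sh}g}})(x,y) = |e^{sh}g(y)|^{1/2}\psi(e^{sh}g,t,x,y)$ is, for each fixed $s$, a smooth function of $(t,x,y)$: by the standard wavefront set analysis for the wave kernel, its singular support lies on the light cone $\{d_{e^{sh}g}(x,y) = t\}$, which is disjoint from our open set. With $L(g,t,x,y)$ denoting the right-hand side of the claimed formula, the task is therefore to show that $K_s$ is jointly $C^1$ on the region, and that $D_h K_s|_{s=0}(t,x,y) = L(g,t,x,y)$.

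Fix a base point $(s_0, t_0, x_0, y_0)$ in the region. By continuity of the injectivity radius in the metric and of $d_{e^{sh}g}$ in its arguments, there is an open product neighborhood $I \times J \times U \times V$ of this point and some $\epsilon > 0$ on which $d_{e^{sh}g}(x,y) + \epsilon < t < \inj(M, e^{sh}g) - \epsilon$. For each $\phi \in C^\infty_c(V)$, applying Lemma \ref{lem: C^1} to the smooth initial datum $\phi$ gives joint $C^1$ regularity on $I \times J \times U$ of $V_\phi(s,t,x) := \cos(t\sqrt{-\Delta_{e^{sh}g}})\phi(x) = \int_M K_s(t,x,y)\phi(y)\,dy$, along with the Duhamel identity \eqref{eq: partial h u}. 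Because $[D_h, \Delta_g]$ and $\sin((t-s')\sqrt{-\Delta_g})/\sqrt{-\Delta_g}$ act only in the $x$ variable, Fubini allows us to pull $\phi(y)\,dy$ through them and rewrite the identity at $s_0 = 0$ as $\int_M \{D_h K_s|_{s=0}(t,x,y) - L(g,t,x,y)\}\phi(y)\,dy = 0$. By density of $C^\infty_c(V)$, this gives the pointwise formula once we know pointwise $C^1$ regularity of $K_s$ in $s$.

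To produce joint pointwise $C^1$ regularity, the plan is to rerun the proof of Lemma \ref{lem: C^1} one level deeper, with $y$ treated as a parameter. The difference $v(s,t,x,y) := K_s(t,x,y) - K_{s_0}(t,x,y)$ satisfies a nonhomogeneous wave equation in $(t,x)$ with forcing $(\Delta_{e^{sh}g} - \Delta_{e^{s_0 h}g})K_s$ and vanishing Cauchy data, the initial data vanishing because $K_s(0,x,y) = \delta(x-y)$ and $\partial_t K_s(0,x,y) = 0$ do not depend on $s$. On our region $K_s$ is smooth in $(t,x,y)$ and the forcing has $(t,x)$-$C^\infty$ seminorms tending to $0$ locally uniformly in $y$ as $s \to s_0$, so the Evans-type regularity invoked in Lemma \ref{lem: C^1}, combined with Sobolev embedding, yields pointwise $v \to 0$; an analogous argument applied to the appropriate difference quotient produces continuity of $\partial_s K_s$, hence joint $C^1$ regularity. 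The formula then follows from uniqueness, since both $D_h K_s|_{s=0}$ and $L(g,t,x,y)$ solve the same inhomogeneous wave Cauchy problem in $(t,x)$. The hard part is this preceding step: because the wave kernel is globally only a distribution, the Evans-type energy estimates must be carefully localized to the region of smoothness---for instance by inserting $y$-cutoffs compatible with finite propagation speed---so that the argument of Lemma \ref{lem: C^1} transfers verbatim with $y$ ranging over a compact parameter set.
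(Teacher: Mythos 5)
Your approach is genuinely different from the paper's, and it has a real gap in the third paragraph. The paper proves the lemma by invoking the Hadamard parametrix for $\cos(t\sqrt{-\Delta_{e^{sh}g}})(x,y)$: the main term of the parametrix depends on the metric through explicit elementary formulas (so its $s$-dependence is manifestly $C^1$), while the parametrix's \emph{remainder} solves the inhomogeneous wave Cauchy problem with a forcing term that is genuinely smooth (indeed increasingly smooth) in $(t,x)$, so Lemma~\ref{lem: C^1} applies to the remainder directly. In other words, the paper first peels off the light-cone singularity explicitly and \emph{then} runs the energy argument.

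Your plan skips the peeling step and applies the mechanism of Lemma~\ref{lem: C^1} to the full kernel. This is where the argument breaks down. You set $v(s,t,x,y) = K_s(t,x,y) - K_{s_0}(t,x,y)$ and treat it as a solution of a wave equation in $(t,x)$ with zero Cauchy data and forcing $F_s = (\Delta_{e^{sh}g} - \Delta_{e^{s_0 h}g})K_s$. But $F_s$, as a distribution in $(t',z)$ for fixed $y$, has singular support on the light cone $\{t' = d_{e^{sh}g}(z,y)\}$. The Duhamel representation of $v(t_0,x_0,y)$ integrates $F_s(t',\cdot)$ against the sine propagator over the backward light cone $\{(t',z): d(x_0,z) \le t_0 - t'\}$, and this region meets the singular set of $F_s$ exactly when $d(x_0,z) + d(z,y) \le t_0$ for some $z$---which is nonempty precisely because the point $(t_0,x_0,y)$ satisfies $d(x_0,y) < t_0$. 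So the forcing is not in $L^2_t H^k_x$ on the relevant slab, the Evans-type estimates cited in Lemma~\ref{lem: C^1} have no purchase, and the claim that the ``$(t,x)$-$C^\infty$ seminorms tend to $0$'' cannot even be stated for $F_s$ globally. Your proposed fix---$y$-cutoffs compatible with finite propagation speed---does not help: the cutoff localizes $y$ but does not remove the light-cone singularity of $F_s$ from the backward cone of $(t_0,x_0)$. Making the argument work would require a genuinely microlocal propagation-of-singularities estimate with uniform (in $s$) control, or else a decomposition of $K_s$ into an explicit singular part plus a smooth remainder---at which point you have rediscovered the Hadamard parametrix argument of the paper.

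Your second paragraph, which establishes the Duhamel formula by testing against $\phi \in C^\infty_c(V)$ and using Fubini, is fine as a derivation of the identity in the distributional sense, and is consistent with what the paper does. But the pointwise $C^1$ regularity that you defer to the third paragraph is the crux, and that is the step that needs the parametrix.
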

	
	\begin{proof}

		The lemma follows if we examine the Hadamard's parametrix of the wave equation closely. Indeed, by \cite[(17.4.6)'']{HormanderIV}, the main term of the parametrix satisfies the lemma since its dependence on the metric is explicit. The remainder solves the inhomogeneous wave equation with an increasingly smooth forcing term. The time and spatial derivatives of the forcing term can be seen to be bounded uniformly in $s$. The lemma then follows from Lemma \ref{lem: C^1}.
	\end{proof}

	It follows by Lemma \ref{lem: C^1 2} that
	\begin{multline*}
		D_h \psi(g,t,x,y) = -\frac{n}{2} h(y) \psi(g,t,x,y) \\
		+ |g(y)|^{-1/2} \int_0^t \frac{\sin((t-s) \sqrt{-\Delta_g})}{\sqrt{-\Delta_g}}[D_h, \Delta_g] \cos(s\sqrt{-\Delta_g})(x,y) \, ds.
	\end{multline*}
	Recalling $\varphi(g,t,x) = \psi(g,t,x,x)$, we obtain
	\begin{multline} \label{eq: D_h phi}
		D_h \varphi(g,t,x) = -\frac{n}{2} h(x) \varphi(g,t,x) \\
		+ |g(x)|^{-1/2} \int_0^t \frac{\sin((t-s) \sqrt{-\Delta_g})}{\sqrt{-\Delta_g}}(x,z) [D_h, \Delta_g] \cos(s\sqrt{-\Delta_g})(z,x) \, dz \, ds,
	\end{multline}
	where $[D_h, \Delta_g]$ acts in the first variable.
	
	By Lemma \ref{lem: commutator}, $[D_h, \Delta_g]$ is supported on $\supp h$. The next proposition describes the support of $D_h \varphi$ in $x$ and $t$.
	
	\begin{proposition}\label{prop: support}
		For fixed $g$, then as a distribution in $x$ and $t$,
		\[
		\supp D_h \varphi \subset \{(t,x) : 2 d_g(x, \supp h) \leq |t|\}.
		\]
	\end{proposition}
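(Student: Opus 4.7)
The plan is to analyze the two summands in the formula \eqref{eq: D_h phi} for $D_h \varphi$ separately, exploiting finite propagation speed for the wave equation on $(M,g)$. I will focus on $t > 0$; the case $t < 0$ then follows from the fact that $\varphi$, and hence $D_h \varphi$, is even as a distribution in $t$, since $\cos(t\lambda_j)$ is.

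The first summand $-\frac{n}{2} h(x) \varphi(g,t,x)$ is clearly supported, as a distribution on $\R_t \times M_x$, inside $\R \times \supp h$; on that set $d_g(x,\supp h) = 0$, so the condition $2 d_g(x,\supp h) \leq |t|$ is automatically satisfied.

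For the integral term I would combine three support observations. First, by Lemma \ref{lem: commutator} the commutator
\[
[D_h, \Delta_g] = -h \Delta_g + \left(\frac n 2 - 1\right) \nabla_g h
\]
has all coefficients vanishing outside $\supp h$, so when it acts in the $z$-variable on a smooth function the output is supported in $\{z \in \supp h\}$. Second, finite propagation speed for the wave equation on $(M,g)$ gives
\[
\supp_{z,x} \cos(s\sqrt{-\Delta_g})(z,x) \subset \{d_g(z,x) \leq |s|\}
\]
and similarly
\[
\supp_{x,z} \frac{\sin((t-s)\sqrt{-\Delta_g})}{\sqrt{-\Delta_g}}(x,z) \subset \{d_g(x,z) \leq |t-s|\}.
\]
Combining, for $s \in [0,t]$ with $t > 0$ the integrand in \eqref{eq: D_h phi} can be nonzero at $(t,x)$ only if there exists $z \in \supp h$ with $d_g(x,z) \leq \min(s, t-s) \leq t/2$. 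This forces $d_g(x, \supp h) \leq t/2$, which is the claimed bound.

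The main technical wrinkle I anticipate is that formula \eqref{eq: D_h phi}, via Lemma \ref{lem: C^1 2}, is only established directly for $t$ less than the injectivity radius of $(M,g)$. To extend to all $t$, I would either glue local versions together using a partition of unity in $t$ together with Duhamel's principle applied distributionally, or, more cleanly, observe that both sides of \eqref{eq: D_h phi} satisfy the same distributional inhomogeneous wave equation \eqref{eq: nonhomogeneous wave} with vanishing Cauchy data and invoke uniqueness for the Cauchy problem. Since finite propagation speed is a global statement about the wave operator, the support argument above goes through unchanged once \eqref{eq: D_h phi} is known as an identity of distributions on $\R_t \times M_x$.
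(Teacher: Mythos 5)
Your argument is correct and follows essentially the same route as the paper's: apply Huygens' principle (finite propagation speed) to the sine and cosine kernels appearing in \eqref{eq: D_h phi}, combine with the fact that $[D_h,\Delta_g]$ has coefficients supported in $\supp h$, and then observe that the constraint $d_g(x,z)\leq\min(s,t-s)$ for some $s\in[0,t]$ and $z\in\supp h$ forces $d_g(x,\supp h)\leq t/2$. You are a touch more careful than the paper in two respects: you explicitly dispose of the zeroth-order term $-\tfrac{n}{2}h(x)\varphi(g,t,x)$ (the paper passes over it silently, though it is indeed trivially supported in $\supp h$ where $d_g(x,\supp h)=0$), and you flag that \eqref{eq: D_h phi} is derived via Lemma \ref{lem: C^1 2} only for $t$ below the injectivity radius. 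The latter concern is legitimate in principle, but note that in the paper all uses of this proposition occur for $t$ well inside the injectivity radius (the times $t_k=2^{-k}\leq 1<\inj(M,g)$ for $g\in\mathcal G_0$), so the paper implicitly works in that regime and no extension is needed; your proposed remedy via Duhamel plus uniqueness of the Cauchy problem would work if one wanted the statement globally in $t$.
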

	
	\begin{proof}
		We assume $t > 0$. The proposition is proved for negative times in a similar way.
		By Huygens' principle and that $\supp\, [D_h, \Delta_g] \subset \supp h$, we have
		\[
		[D_h, \Delta_g] \cos(s\sqrt{-\Delta_g})(z,y)
		\]
		is supported for $d_g(z,y) \leq |s|$ with $z \in \supp h$. Also by Huygens' principle,
		
		\[
		\frac{\sin((t-s) \sqrt{-\Delta_g})}{\sqrt{-\Delta_g}}(x,z)
		\]
		is supported for $d_g(x,z) \leq |t - s|$. Hence,
		\[
		\int_0^t \frac{\sin((t-s) \sqrt{-\Delta_g})}{\sqrt{-\Delta_g}}  [D_h, \Delta_g] \cos(s\sqrt{-\Delta_g})(x,y) \, ds
		\]
		is supported for $(x,y)$ for which there exists $z \in \supp h$ and $s \in [0,t]$ with $d_g(x,z) \leq t - s$ and $d_g(z,y) \leq s$. If $x = y$, this condition reads as
		\[
		d_g(x,z) \leq \min(t-s, s) \qquad \text{ for some $s$ in $[0,t]$}.
		\]
		The proposition follows after maximizing the right side by taking $s = t/2$. 
		
	\end{proof}
	
	The proof of the proposition has a nice physical interpretation. Suppose we make a small conformal perturbation of the metric $g$ along $e^{sh}g$. The quantity $D_h \varphi(g,t,x)$ tells us the degree to which we can ``hear'' the perturbation if we stand at $x$, clap our hands at time $0$, and listen to the reverberation at time $t$. In order to ``hear'' the change in the metric, the sound must travel from $x$ to the region $\supp h$ at which the metric was perturbed, and then it will have to travel back to $x$.
	
	In order to use the tools in Section \ref{sec: generic injections}, we will need to construct nice perturbations $h$ which satisfy some desirable properties. The following lemma does precisely this. Its proof is rather involved and so deferred to Section \ref{sec: proof lemma}.

	\begin{lemma}\label{lem: h construction}
		Suppose $\dim M = n \geq 2$. Fix $x \in M$ and $t > 0$ less than the injectivity radius of $(M,g)$. There exists a smooth function $h$ supported in the annulus
		\[
		\left\{z \in M : \frac t 3 < d_g(x,z) < \frac{2t}{3} \right\}
		\]
		for which
		\[
		D_h \varphi(g,t,x) \neq 0.
		\]
	\end{lemma}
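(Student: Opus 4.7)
Since $h$ will be supported in the annulus $\{t/3 < d_g(x, \cdot) < 2t/3\}$, we have $h(x) = 0$, so the first term in \eqref{eq: D_h phi} vanishes. It therefore suffices to produce $h$ for which the integral term
\[
I(h) := \int_0^t \int_M \frac{\sin((t-s)\sqrt{-\Delta_g})}{\sqrt{-\Delta_g}}(x, z) \, [D_h, \Delta_g] \cos(s\sqrt{-\Delta_g})(z, x) \, dz \, ds
\]
is nonzero. By the finite-propagation argument in the proof of Proposition \ref{prop: support}, only $z \in \supp h$ with $d_g(x, z) \leq \min(s, t-s)$ contribute, so the effective part of $\supp h$ lies in $\{t/3 < d_g(x, \cdot) \leq t/2\}$, and the dominant contribution localizes at the point on the midpoint sphere where the two singular fronts $s = d_g(x, z)$ and $s = t - d_g(x, z)$ coalesce, namely at $d_g(x, z) = s = t/2$.

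My plan is to localize $h$ to a small bump near a chosen point $z_0$ with $d_g(x, z_0) = t/2$ and compute the leading asymptotic of $I(h)$ as this bump concentrates. Since $t < \inj(M, g)$, for the relevant range of $(s, z)$ both wave kernels admit Hadamard parametrices as explicit oscillatory integrals with phase related to $\exp_x^{-1}(z)$ and amplitudes built from the Van Vleck--Morette determinant and its transport hierarchy. Substituting these parametrices into $I(h)$, using Lemma \ref{lem: commutator} together with the identity $\Delta_g \cos(s\sqrt{-\Delta_g}) = \partial_s^2 \cos(s\sqrt{-\Delta_g})$ to rewrite the commutator, and integrating by parts in $s$ (boundary terms at $s = 0, t$ vanish since $z_0 \neq x$) reduces $I(h)$ to a finite sum of oscillatory integrals over $(s, z)$ whose critical set is exactly $\{s = t/2,\ d_g(x, z) = t/2\}$. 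Below the injectivity radius there are no conjugate points along the minimizing geodesic from $x$ to $z_0$, so the phase Hessian is non-degenerate in the normal directions, and standard stationary phase yields a leading-order asymptotic of the form
\[
I(h) = \int_{\{d_g(x, z) = t/2\}} h(z) \, K(g; t, x, z) \, d\sigma_g(z) + \text{lower order in the scale of } h,
\]
where $K$ is a smooth function on the midpoint sphere.

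The main obstacle is to verify $K \not\equiv 0$ rather than its being cancelled by some interaction between the two pieces of $[D_h, \Delta_g]$. Since $[D_h, \Delta_g]$ is a second-order differential operator with principal symbol $-h(z) |\xi|_{g(z)}^2$ coming from $-h\Delta_g$, while the $(n/2-1)\nabla_g h$ piece contributes only a first-order part, the leading-order stationary-phase coefficient $K$ is determined by $-h(z)|\xi|^2_g$ evaluated at the critical cotangent direction, together with the value of the Van Vleck amplitude along the connecting geodesic. In geodesic normal coordinates at $x$, where the metric is Euclidean to first order and the Van Vleck determinant is $1$ to leading order, an explicit Euclidean wave-kernel computation shows $K$ is a nonzero universal constant depending only on $n$ times a positive geometric factor. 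Hence choosing $h$ to be a nonnegative bump supported in the annulus with $h(z_0) > 0$ yields $I(h) \neq 0$, completing the construction.
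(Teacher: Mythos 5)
Your overall strategy is the same as the paper's: plug the Hadamard parametrix into \eqref{eq: D_h phi}, localize near the ``midpoint'' critical set $\{s = t/2,\ d_g(x,z) = t/2\}$, and apply stationary phase, so the plan is sound in broad outline. However, there is a genuine and fatal gap in the step where you dismiss the $(\tfrac n 2 - 1)\nabla_g h$ part of the commutator as ``first-order and hence lower order.'' The paper explicitly flags this as the crux of the lemma: when $n = 1$ the contributions of the two pieces of $[D_h,\Delta_g]$ cancel completely, which is why the lemma requires $n \geq 2$. After moving derivatives off $h$ by integration by parts (as the paper does), the integrand becomes $-h\bigl(\tfrac n 2 S\,\Delta_g C + (\tfrac n 2 - 1)\nabla_g S\cdot\nabla_g C\bigr)$; when the two parametrix phases are coupled and evaluated at the unique critical point $\xi = \eta = \tfrac 1 2 e_1$, the scalar factor $\tfrac n 2 |\xi|^2 + (\tfrac n 2 - 1)\langle\eta,\xi\rangle$ gives $\tfrac{n-1}{4}$. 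Both terms enter at the \emph{same} leading order and the $(n-1)$ factor is precisely the residue of their partial cancellation. Your sketch, which would yield a nonzero $K$ for every $n$ (including $n = 1$), therefore cannot be correct as stated; any valid proof must exhibit this $(n-1)$ and only then use $n \geq 2$.

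A secondary concern is the choice of test perturbation. The paper uses oscillatory testing — $h(z) = a(z)\cos(\lambda z_1 + \theta)$ with $\lambda \to \infty$ — so that the dual Fourier variable of $h$ provides the large parameter for stationary phase, and the amplitude at the critical point is then explicitly $\tfrac{i}{8}(n-1)\lambda + O(1)$. Your proposal concentrates $h$ in space instead, which is dual in spirit, but you do not explain how to control the pairing of a shrinking bump against the distributional product $S(t-s,\cdot)\,\Delta_g C(s,\cdot)$, whose singular supports coalesce exactly along the midpoint set you are zooming in on; the pointwise kernel $K$ you invoke is not obviously well-defined without the same kind of parametrix-and-cutoff bookkeeping the paper carries out, and without a clean large parameter the remainder estimate is unclear. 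The oscillatory-$h$ route makes the large parameter explicit and is also what makes the $(n-1)$ cancellation visible, so I would recommend following it rather than trying to repair the spatial-concentration version.
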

	
	A standard wavefront set calculation tells us the wavefront set of $D_h \varphi(g,t,x)$, as a distribution in $t$, is supported for $|t|$ outside the injectivity radius of $(M,g)$. Hence, $D_h \varphi(g,t,x)$ is smooth for $|t|$ less than the injectivity radius, and the conclusion of the lemma makes sense.

	With all our tools in hand, we are ready to prove Theorem \ref{thm: generic echolocation}.
	
	\subsection{Proof of Theorem \ref{thm: generic echolocation}}
	
	We may express $\mathcal G$ as the countable union
	\[
	\mathcal G = \bigcup_{j = 0}^\infty \mathcal G_j
	\]
	where $\mathcal G_j$ is the set of those metrics whose injectivity radius is larger than $2^{-j}$. It suffices then to show Theorem \ref{thm: generic echolocation} holds for $\mathcal G_j$ for each $j$, which follows from the case $j = 0$ by rescaling.
	
	Let $\Delta$ denote the diagonal in $M \times M$, and let $\mathcal H$ denote the set of all triples $(g,x,y) \in \mathcal G_0 \times (M \times M \setminus \Delta)$ for which the counting functions $N_x$ and $N_y$ are identical. If we can satisfy the hypotheses of Theorem \ref{thm: meager}, it yields a residual set of metrics in $\mathcal G_0$, for which
	\[
	N_x \neq N_y \text{ whenever } x \neq y \text{ for each } x,y \in M,
	\]
	as desired. So, we must construct $(2n+1)$-dimensional slices across $\mathcal H$ at each point in $(g_0, x_0, y_0) \in \mathcal G_0 \times M \times M$, $x_0 \neq y_0$.
	
	Take a sequence of times $t_0,t_1,\ldots$ such that, for all $(g,x,y)$ in a neighborhood of $(g_0,x_0,y_0)$,
	\[
	t_0 < \frac 12 \min(d_g(x,y), 1) \qquad \text{ and } \qquad t_{k} = \frac 12 t_{k-1} \text{ for $k = 1,2,\ldots$}.
	\]
	Then, take $\pi$ in Definition \ref{def: slice} to be the map from this neighborhood to $\R^{2n+1}$ given by
	\[
	\pi(g,x,y) = \begin{bmatrix}
		\varphi(g,t_0,x) - \varphi(g,t_0,y) \\
		\vdots \\
		\varphi(g,t_{2n},x) - \varphi(g,t_{2n},y)
	\end{bmatrix}.
	\]
	Note, $(g,x,y) \in \mathcal H$ implies $\pi(g,x,y) = 0$ by Proposition \ref{prop: cosine transform}. 
	
	We construct $\rho$ of Definition \ref{def: slice} by taking smooth functions $h_0, \ldots, h_{2n}$ on $M$ and setting
	\[
	\rho(s) = e^{s \cdot h} g_0,
	\]
	where here we have written
	\[
	s \cdot h = s_0 h_0 + \cdots + s_{2n} h_{2n}
	\]
	as shorthand. Shrinking the domain of $\rho$ as necessary to a small enough neighborhood of the origin, we set
	\[
	\Phi(s,x,y) = \pi(\rho(s),x,y) = \begin{bmatrix}
		\varphi(e^{s \cdot h} g_0,t_0,x) - \varphi(e^{s \cdot h} g_0,t_0,y) \\
		\vdots \\
		\varphi(e^{s \cdot h} g_0,t_{2n},x) - \varphi(e^{s \cdot h} g_0,t_{2n},y)
	\end{bmatrix}.
	\]
	Lemma \ref{lem: C^1 2} guarantees $\Phi$ is $C^1$. Finally, to verify $\Phi$ is a slice, we must select $h_0,\ldots,h_{2n}$ cleverly enough so that
	\[
	d_s \Phi(g_0, x_0, y_0) = \begin{bmatrix}
		D_{h_j} \varphi(g_0,t_k,x_0) - D_{h_j} \varphi(g_0,t_k,y_0)
	\end{bmatrix}_{k,j = 0}^{2n}
	\]
	is nonsingular. Lemma \ref{lem: h construction} allows us to select $2n + 1$ smooth functions $h_0, h_1,\ldots, h_{2n}$ for which
	\[
	D_{h_k} \varphi(g,2^{-k},x) = 1 \quad \text{ and } \quad \supp h_k \subset \left\{z \in M : \frac 1 3 2^{-k} < d_g(x,z) < \frac 2 3 2^{-k} \right\}.
	\]
	By construction and Proposition \ref{prop: support}, we also have
	\[
	D_{h_k} \varphi(g,2^{-k},y) = 0 \qquad \text{ for } k \in \{k_0,\ldots, k_0 + 2n\}.
	\]
	Hence,
	\[
	d_s \Phi(g_0, x_0, y_0) = \begin{bmatrix}
		1 & * & * & \cdots & * \\
		0 & 1 & * & \cdots & * \\
		0 & 0 & 1 & \cdots & * \\
		\vdots & \vdots & \vdots & \ddots & \vdots \\
		0 & 0 & 0 & \cdots & 1
	\end{bmatrix},
	\]
	which is nonsingular as desired.
	
	To summarize, we have produced a $(2n + 1)$-dimensional slice across $\mathcal H$ at each point $\{ (g_0, x_0, y_0) \in \mathcal G_0 \times M \times M : x_0 \neq y_0\}$. Theorem \ref{thm: meager} then yields a residual class of metrics in $\mathcal G_0$ for which $N_x = N_y$ only when $x = y$. Theorem \ref{thm: generic echolocation} follows.

	\section{Proof of Lemma \ref{lem: h construction}} \label{sec: proof lemma}
	
	At first glance, it seems absurd that the conclusion of Lemma \ref{lem: h construction} could possibly fail to hold. We have the freedom to take any perturbation we like from the infinite-dimensional space of options we have for $h$ as long as it has the required support, and so surely we can hear at least one of these perturbations at $x$, if not most.
	
	However, when we consider the $n = 1$ case, we run into a conceptual obstruction. There are no local geometric invariants on a one-dimensional Riemannian manifold, and in particular, we expect no perturbation to be audible for small times. This means that, at least when $n = 1$, there is a complete cancellation between the contribution of the two terms of $[D_h, \Delta_g]$, which forces $D_h \varphi(g,t,x) = 0$. We must show that this can be avoided for $n \geq 2$.  This is consistent with the fact that sharp Huygens' principle always holds in dimension one, but gets more complicated in higher dimensions. See, e.g., \cite{gunther}.
	
	To this end, we will choose a convenient perturbation $h$ and then carefully estimate $D_h \varphi(g,t,x)$ to ensure it is nonzero. For this, we select $h$ to be a rapidly oscillating function with the desired support. In effect, this allows us to use oscillatory testing to extract main- and remainder-term asymptotics for $D_h \varphi(g,t,x)$ as the frequency of $h$ tends to infinity.
	
	The main result of this section is:
	
	\begin{lemma}\label{lem: h asymptotics}
		Fix geodesic normal coordinates $(z_1,\ldots,z_n) \in \R^n$ about $x$, and take
		\[
		h(z) = a(z) \cos(\lambda z_1 + \theta)
		\]
		where $\theta \in \R$ and $a$ is a smooth function supported in $\{z : |z - t/2| < t/6\}$ with $a(t/2, 0,\ldots,0) = 1$. Then,
		\[
		D_h \varphi(g,t,x) = (2\pi)^{-\frac{n-1}{2}} \frac{n - 1}{16} \lambda^{\frac{n+1}{2}} t^{-\frac{n-1}{2}} \cos(\pi(n-3)/4 + t/2 + \theta) + O(\lambda^{\frac{n-1}{2}})
		\]
		as $\lambda \to \infty$.
	\end{lemma}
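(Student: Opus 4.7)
Since $h$ vanishes on a neighborhood of $x$, the boundary term $-\tfrac{n}{2}h(x)\varphi(g,t,x)$ in \eqref{eq: D_h phi} drops. Working in geodesic normal coordinates at $x$ (so $x = 0$ and $|g(0)|^{-1/2} = 1$), we are reduced to evaluating
\[
\int_0^t\!\!\int_M K_s(0,z)\,[D_h,\Delta_g]\,C_s(z,0)\,dz\,ds,
\]
where $K_s = \sin((t-s)\sqrt{-\Delta_g})/\sqrt{-\Delta_g}$ and $C_s = \cos(s\sqrt{-\Delta_g})$. I would expand $[D_h,\Delta_g]$ via Lemma \ref{lem: commutator}, convert $\Delta_g C_s$ to $\partial_s^2 C_s$ by the wave equation, and integrate by parts in $z$ to move $\nabla_g h$ onto the wave kernels. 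Replacing $C_s$ and $K_s$ by their Hadamard parametrices in normal coordinates (the leading ``flat'' term contributes the principal part, while subprincipal Hadamard symbols and the deviation of $|\xi|_{g(z)}$ from $|\xi|$ contribute only to the $O(\lambda^{(n-1)/2})$ error), and carrying out the $s$-integral via the product-to-sum identity
\[
\int_0^t \cos(s|\xi|)\,\frac{\sin((t-s)|\eta|)}{|\eta|}\,ds = \frac{\cos(t|\eta|) - \cos(t|\xi|)}{|\xi|^2 - |\eta|^2},
\]
brings $D_h\varphi$ to the oscillatory integral
\[
(2\pi)^{-2n}\int\!\!\int \hat h(\eta-\xi)\,\frac{\cos(t|\eta|) - \cos(t|\xi|)}{|\xi|^2 - |\eta|^2}\,\Bigl[\tfrac{n}{2}|\xi|^2 - (\tfrac{n}{2}-1)\xi\cdot\eta\Bigr]\,d\xi\,d\eta,
\]
modulo lower-order terms.

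\textbf{Asymptotic evaluation.} Split $\hat h(\mu) = \tfrac12 e^{i\theta}\hat a(\mu - \lambda e_1) + \tfrac12 e^{-i\theta}\hat a(\mu + \lambda e_1)$; for the first term, parametrize $\mu = \lambda e_1 + \nu$ with $\nu = O(1)$ set by the support of $\hat a$. The denominator $|\xi|^2 - |\eta|^2$ vanishes along the ``energy-conserving'' hyperplane $\eta_1 = \lambda/2 + \nu_1/2 + O(\lambda^{-1})$, on which the numerator also has a first-order zero. To resolve this $0/0$ structure, introduce the scaling $\eta_1 = \lambda/2 + \delta$, $\eta_\perp = \sqrt\lambda\,y$ with $(\delta, y)$ of $O(1)$. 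Then the $\delta$-integral produces a Dirichlet principal-value integral $\int \sin(tu)/u\,du = \pi$; the $y$-integral is an $(n-1)$-dimensional Fresnel integral yielding $(\pi/t)^{(n-1)/2}e^{i\pi(n-1)/4}$ together with the Jacobian $\lambda^{(n-1)/2}$; and the $\nu$-integral against $\hat a(\nu)$ localizes to the scattering point $(t/2)e_1$ via the Fourier identity $\int \hat a(\nu)\,e^{it\nu_1/2}\,d\nu = (2\pi)^n a((t/2)e_1) = (2\pi)^n$. Adding the complex-conjugate contribution from the $-\lambda e_1$ branch produces the real answer of the stated form.

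\textbf{The key algebraic observation} is that at the resonance with $\mu = \lambda e_1$ and $|\xi| = |\eta|$ the amplitude bracket satisfies
\[
\tfrac{n}{2}|\xi|^2 - (\tfrac{n}{2}-1)\xi\cdot\eta = \frac{n-1}{4}\lambda^2 + O(\lambda),
\]
which produces the factor $n - 1$ in the final coefficient and, crucially, vanishes when $n = 1$, consistent with the absence of local Riemannian invariants in one dimension and with the sharp Huygens principle in that case.

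\textbf{The main obstacle} is the bookkeeping of the various prefactors, Maslov phase from the Fresnel integral, and normalization conventions needed to extract the precise coefficient $(n-1)/16$ and phase offset $\pi(n-3)/4$. A secondary concern is to verify carefully that the subprincipal terms in the Hadamard parametrix together with the metric corrections to the phase over the $O(1)$-support of $h$ only enter the remainder $O(\lambda^{(n-1)/2})$; this comes down to noting that each subprincipal symbol in the parametrix carries an extra factor of $\lambda^{-1}$ after the same stationary-phase calculation, so the remainder is precisely one power of $\lambda$ below the leading $\lambda^{(n+1)/2}$.
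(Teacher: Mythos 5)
Your proposal is a genuinely different route to the same asymptotics. The paper substitutes the Hadamard parametrices for both wave kernels, keeps the $s$-integral, and applies the method of stationary phase in all $3n+1$ variables $(s,z,\eta,\xi)$ at once, then computes the determinant ($4t^{n-1}$) and signature ($-n+1$) of the full $(3n+1)\times(3n+1)$ Hessian by hand via conjugations. You instead carry out the $s$-integral in closed form using the product-to-sum identity, which collapses the time variable and produces the resonance kernel $\bigl(\cos(t|\eta|)-\cos(t|\xi|)\bigr)/\bigl(|\xi|^2-|\eta|^2\bigr)$; you then evaluate the remaining $2n$-dimensional Fourier integral by anisotropic scaling near the resonance hyperplane $|\xi|=|\eta|$, with the $\delta$-variable producing a Dirichlet-kernel $\pi$ and the $y$-variables producing an $(n-1)$-dimensional Fresnel integral. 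This avoids the large Hessian computation entirely and makes the Maslov phase $e^{i\pi(n-1)/4}$ arise cleanly from the Fresnel factor. Both approaches isolate the same critical configuration $(s,z,\eta,\xi)=(t/2,\tfrac t2 e_1,\tfrac\lambda2 e_1,\tfrac\lambda2 e_1)$ and both hinge on the same algebraic identity evaluating the amplitude there to $\tfrac{n-1}{4}\lambda^2$ (consistent with vanishing when $n=1$); the power count $\lambda^{2}\cdot\lambda^{(n-1)/2}\cdot\lambda^{-1}=\lambda^{(n+1)/2}$ in your scaling matches the stationary-phase output $\lambda^{2n}\cdot\lambda^{-(3n+1)/2}\cdot\lambda=\lambda^{(n+1)/2}$ in the paper.

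Two caveats you should make precise if you flesh this out. First, the product-to-sum $s$-integration uses the flat (order-$0$, $s$-independent) leading symbol in the Hadamard parametrix; since $b_\pm-\tfrac12\in S^{-2}$ and $c_\pm-\tfrac{1}{\pm2i|\xi|}\in S^{-3}$, the $s$-dependent corrections do land in the $O(\lambda^{(n-1)/2})$ error as you claim, but this is a place where the justification must be spelled out rather than asserted, since these corrections do not integrate out via the same identity. Second, note there is also a rapidly-oscillating factor $e^{\pm i\lambda t/2}$ coming out of both computations (it is the value of the scaled phase $-t/2$ at the critical point times $\lambda$, or equivalently the $e^{it\lambda/2}$ surviving your Fresnel step); this is a harmless typo (or convention) to reconcile with the quoted $\cos(\pi(n-3)/4+t/2+\theta)$, and it does not affect the downstream use of the lemma since $\theta$ is free and only nonvanishing of $D_h\varphi$ is needed.
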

	
	To see how Lemma \ref{lem: h construction} follows, we first take $\theta = -\pi(n-3)/4 - t/2$ to ensure the main term is nonzero. Then, we ensure the left side is nonzero by selecting a large enough $\lambda$ so that the main term is strictly greater than the remainder.
	
	\subsection{A quick rephrasing}
	
	As an operator taking functions to functions (rather than densities to functions, half-densities to half-densities, etc.), the kernel of the sine wave operator satisfies
	\[
	\frac{\sin(r\sqrt{-\Delta_g})}{\sqrt{-\Delta_g}}(x,z) = \frac{\sin(r\sqrt{-\Delta_g})}{\sqrt{-\Delta_g}}(z,x) |g(z)|^{1/2}  |g(x)|^{-1/2}.
	\]
	Hence by \eqref{eq: D_h phi}, 
	\begin{multline*}
		D_h \varphi(g,t,x) = |g(x)|^{-1} \int_0^t \int_M \frac{\sin((t - s) \sqrt{-\Delta_g})}{\sqrt{-\Delta_g}}(z,x) \\
		\cdot [D_h, \Delta_g] \cos(s \sqrt{-\Delta_g})(z,x) |g(z)|^{1/2} \, dz \, ds,
	\end{multline*}
	where here $[D_h, \Delta_g]$ acts in the $z$ variable of $\cos(s \sqrt{-\Delta_g})(z,x)$. Using geodesic normal coordinates $z = (z_1,\ldots,z_n)$ about $x = (0,\ldots,0)$ as in the statement of the lemma, we obtain
	\begin{multline*}
		D_h \varphi(g,t,x) = \int_0^t \int_{\R^n} \frac{\sin((t - s) \sqrt{-\Delta_g})}{\sqrt{-\Delta_g}}(z,0) \\
		\cdot [D_h, \Delta_g] \cos(s \sqrt{-\Delta_g})(z,0) |g(z)|^{1/2} \, dz \, ds.
	\end{multline*}
	
	At this point, it will be convenient to introduce some shorthand. We let
	\[
	S(s,z) := \frac{\sin(s\sqrt{-\Delta_g})}{\sqrt{-\Delta_g}}(z,0)
	\]
	and
	\[
	C(s,z) := \cos(s\sqrt{-\Delta_g})(z,0).
	\]
	We consider the contribution of the second term of $[D_h, \Delta_g]$ in Lemma \ref{lem: commutator}. In particular, by a distributional integration by parts, we have
	\begin{multline*}
		\left(\frac n 2 - 1 \right) \int_{\R^n} S(t-s,z) \nabla_g h(z) \cdot \nabla_g C(s,z) |g(z)|^{1/2} \, dz \\
		= - \left(\frac n 2 - 1 \right) \int_{\R^n} h(z) \nabla_g S(t-s,z) \cdot \nabla_g C(s,z) |g(z)|^{1/2} \, dz \\
		- \left(\frac n 2 - 1 \right) \int_{\R^n} h(z) S(t-s,z) \cdot \Delta_g C(s,z) |g(z)|^{1/2} \, dz.
	\end{multline*}
	Hence in total,
	\begin{multline*}
		D_h \varphi(g,t,x) = \\
		- \int_{\R^n} \left( \int_0^t \frac n 2 S(t-s,z) \Delta_g C(s,z) + \Big(\frac n 2 - 1 \Big) \nabla_g S(t-s,z) \cdot \nabla_g C(s,z) \, ds \right) \\
		\cdot h(z) |g(z)|^{1/2} \, dz \, ds
	\end{multline*}
	We recall
	\[
	h(z) = a(z) \cos(\lambda z_1 + \theta) = \Re \left(a(z) e^{-i(\lambda z_1 + \theta)} \right) 
	\]
	and note that, since both the left side and the integral in parentheses are real-valued, we may write
	\[
	D_h \varphi(g,t,x) = \Re (e^{-i\theta} \widehat u(0,\lambda e_1))
	\]
	where $u$ is the distribution on $\R^{1+n}$ given by
	\begin{multline*}
		u(s,z) = - a(z) |g(z)|^{1/2} \mathbf 1_{[0,t]}(s) \\
		\cdot \left(\frac n 2 S(t-s,z) \Delta_g C(s,z) + \Big(\frac n 2 - 1 \Big) \nabla_g S(t-s,z) \cdot \nabla_g C(s,z) \right)
	\end{multline*}
	and $\widehat u$ denotes the Fourier transform on $\R^{1+n}$.
	
	We have an opportunity now to smooth out the indicator function $\mathbf 1_{[0,t]}$. To do this, consider a smooth cutoff $\gamma$ with $\gamma(s) = 1$ for $|s - t/2| \leq t/6$ and $\gamma(s) = 0$ for $|s - t/2| \geq t/4$. We claim that $(1 - \gamma(s))u(s,z)$ is smooth in both variables, and hence
	\[
	\widehat u(0,\lambda e_1) = \widehat{\gamma u}(0,\lambda e_1) + O(\lambda^{-\infty}).
	\]
	To see this, we only need to note that $S(t-s,z)$ is smooth for $|t - s| \neq |z|$ and $C(s,z)$ is smooth for $|s| \neq |z|$. Now if $(s,z)$ is in the support of $(1 - \gamma(s))u(s,z)$, then
	\begin{enumerate}
		\item $|s - t/2| \geq t/6$, and
		\item $z \in \supp a$, and hence $|z - \frac t 2 e_1| < t/6$, and hence $||z| - t/2| < t/6$.
	\end{enumerate}
	Hence, if $(s,z)$ lies in the support of $(1 - \gamma(s))a(z)$ and $0 \leq s \leq t$, then $(1 - \gamma(s))u(s,t)$ is smooth on a neighborhood of $(s,z)$.
	
	We now set
	\begin{multline*}
		v(s,z) = \gamma(s) u(s,z) = - a(z) |g(z)|^{1/2} \gamma(s) \\
		\cdot \left(\frac n 2 S(t-s,z) \Delta_g C(s,z) + \Big(\frac n 2 - 1 \Big) \nabla_g S(t-s,z) \cdot \nabla_g C(s,z) \right)
	\end{multline*}
	and note
	\begin{equation}\label{eq: D_h phi real part}
		D_h \varphi(g,t,x) = \Re( e^{-i\theta} \widehat v(0,\lambda e_1)) + O(\lambda^{-\infty}).
	\end{equation}
	
	\subsection{Hadamard's parametrix} A \emph{symbol} of order $m$ on $\R^n \times \R^N$ is a smooth function $a$ on $\R^n \times \R^N$ satisfying bounds
	\[
	|\partial_\theta^\beta \partial_x^\alpha a(x,\theta)| \leq C_{\alpha,\beta}(1 + |\theta|)^{m - |\beta|}
	\]
	for multiindices $\alpha$ and $\beta$ (see e.g. \cite{SFIO, Hang, DuistermaatFIOs, HIII}). The set of such symbols is denoted $S^m(\R^n \times \R^N)$. We typically consider $x \in \R^n$ the spatial variables and $\theta \in \R^N$ the frequency variables.

	Hadamard's parametrix allows us to write the distribution kernels of the sine and cosine wave operators as oscillatory integrals. We will rephrase the characterization of Hadamard's parametrix as it appears in \cite[Chapter 2]{Hang}. First, consider $z = (z_1,\ldots,z_n)$ in geodesic normal coordinates about $x$ as in Lemma \ref{lem: h asymptotics}. Then, there exist symbols $b_\pm(s,z,\xi)$ in $S^0(\R^{1 + n} \times \R^n)$ with
	\[
	b_{\pm}(s,z,\xi) - \frac 12 \in S^{-2}(\R^{1 + n} \times \R^n)
	\]
	and
	\begin{equation}\label{eq: cosine parametrix}
		C(s,z) = (2\pi)^{-n} |g(z)|^{-1/4} \sum_\pm \int_{\R^n} e^{i(\langle z, \xi \rangle \pm s|\xi|)} b_\pm(s,z,\xi) \, d\xi + R_1(s,z)
	\end{equation}
	where $R_1$ is a smooth discrepancy. Furthermore, at the expense of absorbing a smooth error into $R_1$, we may take $b$ to be supported for $|\xi| \geq 1$. Similarly, there exist symbols $c_\pm(s,z,\xi)$ of order $-1$ with
	\[
	c_\pm(s,z,\xi) - \frac{1}{\pm 2 i|\xi|} \in S^{-3}(\R^{1 + n} \times \R^n)
	\]
	and support in $|\xi| \geq 1$ such that
	\begin{equation}\label{eq: sine parametrix}
		S(s,z) = (2\pi)^{-n} |g(z)|^{-1/4} \sum_\pm \int_{\R^n} e^{i(\langle z,\xi \rangle \pm s|\xi|)} c_{\pm}(s,z,\xi) \, d\xi + R_2(s,z),
	\end{equation}
	where $R_2$ is again a smooth discrepancy.
	
	\subsection{Reduction to an oscillatory integral} To make our task easier, we start by breaking $v$ into two terms,
	\[
	v = v_1 + v_2
	\]
	where
	\[
	v_1(s,z) = - a(z) |g(z)|^{1/2} \gamma(s) \frac n 2 S(t-s,z) \Delta_g C(s,z)
	\]
	and 
	\[
	v_2(s,z) = - a(z) |g(z)|^{1/2} \gamma(s) \Big(\frac n 2 - 1 \Big) \nabla_g S(t-s,z) \cdot \nabla_g C(s,z).
	\]
	
	We first examine $v_1$. Note,
	\begin{align*}
		\Delta_g C(s,z) &= \partial_s^2 C(s,z) \\
		&= (2\pi)^{-n} |g(z)|^{-1/4} \sum_\pm \int_{\R^n} e^{i(\langle z, \xi \rangle \pm s|\xi|)} (-|\xi|^2) b_\pm(s,z,\xi) \, d\xi + \partial_s^2 R_1(s,z)
	\end{align*}
	where in the second line the lower-order parts of the symbol $b_\pm$ have changed. One quickly sees this by showing
	\[
	e^{-i(\langle z, \xi \rangle \pm s|\xi|)} \partial_s^2 \left( e^{i(\langle z, \xi \rangle \pm s|\xi|)} b_\pm(s,z,\xi) \right) + |\xi|^2 b_\pm(s,z,\xi) \in S^1(\R^{1 + n} \times \R^n).
	\]
	Then,
	\begin{multline*}
		v_1(s,z) \\
		= (2\pi)^{-2n} \frac n 2 \sum_{\pm,\pm'} \iint e^{i(\langle z, \eta + \xi \rangle \pm' (t-s)|\eta| \pm s|\xi|)} \gamma(s) a(z) c_{\pm'}(t-s,z,\eta) b_\pm(s,z,\xi) |\xi|^2 \, d\eta \, d\xi \\
		- (2\pi)^{-n} \frac{n}{2} |g(z)|^{1/4} \sum_{\pm'} \int e^{i(\langle z, \eta \rangle \pm'(t-s)|\eta|)} \gamma(s) a(z) c_{\pm'}(t-s,z,\eta) \partial_s^2 R_1(s,z) \, d\eta \\
		+ (2\pi)^{-n} \frac{n}{2} |g(z)|^{1/4} \sum_{\pm} \int e^{i(\langle z, \xi \rangle \pm s|\xi|)} |\xi|^2 \gamma(s) a(z) b_{\pm}(s,z,\eta) R_2(t-s,z) \, d\xi \\
		- \frac n 2 a(z) |g(z)|^{1/2} \gamma(s) R_2(t-s,z) \partial_s^2 R_1(s,z),
	\end{multline*}
	where $b_{\pm}$ and $c_{\pm'}$ are as they appear in \eqref{eq: cosine parametrix} and \eqref{eq: sine parametrix} except perhaps up to lower-order symbols. Now, the fourth term on the right is smooth in all variables, so it contributes a negligible term to $\widehat{v}(0,\lambda e_1)$. We can see that the second and third terms also yield a rapidly-vanishing contribution by multiplying by integrating in $s$, and then using integration by parts in $s$ to obtain an arbitrarily smooth function of $z$. Hence, we are left with having to contend with the first term.
	
	We may repeat the process for $v_2$, except we fix indices $i,j \in \{1,\ldots,n\}$ and replace $S$ with $\partial_{z_i} S$ and $\Delta_g C$ with $\partial_{z_j} C$. Tracing through the steps above, we obtain a main term of
	\begin{multline*}
		\left(\frac n 2 - 1 \right) \sum_{i,j} g^{ij}(z) \sum_{i,j} \sum_{\pm,\pm'} \iint e^{i(\langle z, \eta + \xi \rangle \pm' (t-s)|\eta| \pm s|\xi|)} \\
		\cdot a(z) c_{\pm'}(t-s,z,\eta) b_\pm(s,z,\xi) \eta_i \xi_j \, d\eta \, d\xi
	\end{multline*}
	Combining the main terms of $v_1$ and $v_2$ and taking a Fourier transform yields:
	
	\begin{lemma} \label{lem: pre-cut fourier integral}
		We have
		\begin{multline*}
			\widehat v(0,\lambda e_1) = (2\pi)^{-2n} \sum_{\pm,\pm'} \iiiint e^{i(\langle z, \eta + \xi - \lambda e_1 \rangle \pm' (t-s)|\eta| \pm s|\xi|)} \\
			\cdot \left( \frac n 2 |\xi|^2 + \left(\frac n 2 - 1\right)\langle \eta, \xi \rangle_{g(z)} \right) \gamma(s) a(z) c_{\pm'}(t-s,z,\eta) b_\pm(s,z,\xi) \, ds \, dz \, d\xi \, d\eta \\
			+ O(\lambda^{-\infty})
		\end{multline*}
		where $b_\pm$ and $c_{\pm'}$ are as they appear in \eqref{eq: cosine parametrix} and \eqref{eq: sine parametrix}, respectively, up to addition by a lower-order symbol.
	\end{lemma}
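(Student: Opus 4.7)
The plan is to assemble the formula by combining the main-term contributions from $v_1$ and $v_2$ identified in the paragraphs immediately preceding the lemma, dispose of the four remainder pieces in each case, and then take the Fourier transform termwise. The excerpt has already essentially carried out the work for $v_1$: after inserting \eqref{eq: sine parametrix} for $S(t-s,z)$ and the differentiated parametrix for $\Delta_g C = \partial_s^2 C$, four pieces arise, and only the all-parametrix piece survives. The other three either contain a product $R_2(t-s,z)\,\partial_s^2 R_1(s,z)$ of smooth discrepancies (which is jointly smooth in $(s,z)$, hence its Fourier transform decays to all orders in $\lambda$), or they pair one parametrix against one smooth remainder, in which case repeated integration by parts in $s$ against the oscillating factor $e^{\pm i s |\xi|}$ or $e^{\pm' i (t-s)|\eta|}$ produces arbitrarily many negative powers of $|\xi|$ or $|\eta|$, which on integration in $\xi,\eta$ yield $O(\lambda^{-\infty})$. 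Crucially, the $|g(z)|^{1/2}$ factor in $v_1$ cancels exactly the product $|g(z)|^{-1/4}\cdot|g(z)|^{-1/4}$ from the two parametrix prefactors, leaving the clean amplitude $\frac{n}{2}|\xi|^2\,\gamma(s)\,a(z)\,c_{\pm'}b_{\pm}$ with no residual volume factor.

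Next I would run the same argument for $v_2$. In the geodesic normal coordinates fixed at $x$, $\nabla_g S\cdot\nabla_g C = g^{ij}(z)\,\partial_{z_i}S\,\partial_{z_j}C$, and applying $\partial_{z_i}$ to \eqref{eq: sine parametrix} produces the factor $i\eta_i$ against the phase plus lower-order corrections where the derivative instead falls on the symbol $c_{\pm'}$ or on $|g(z)|^{-1/4}$; these corrections drop the symbol order by one and so can be absorbed into a redefinition of $c_{\pm'}$, which is exactly the flexibility the lemma statement permits. The analogous statement holds for $\partial_{z_j}C$. The principal-symbol product is then $(i\eta_i)(i\xi_j)g^{ij}(z) = -\langle\eta,\xi\rangle_{g(z)}$, which combines with the prefactor $-(\frac{n}{2}-1)$ from the definition of $v_2$ (and again the volume-factor cancellation $|g|^{1/2}\cdot|g|^{-1/2}=1$) to give the main-term amplitude $+(\frac{n}{2}-1)\langle\eta,\xi\rangle_{g(z)}\,\gamma(s)\,a(z)\,c_{\pm'}b_{\pm}$. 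The three cross terms are negligible by exactly the same smoothness-plus-integration-by-parts dichotomy used for $v_1$.

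Summing the main terms of $v_1$ and $v_2$ produces the combined amplitude $\frac{n}{2}|\xi|^2 + (\frac{n}{2}-1)\langle\eta,\xi\rangle_{g(z)}$ with common factor $\gamma(s)a(z)c_{\pm'}(t-s,z,\eta)b_{\pm}(s,z,\xi)$ and phase $\langle z,\eta+\xi\rangle \pm'(t-s)|\eta|\pm s|\xi|$. Taking $\widehat v(0,\lambda e_1) = \iint e^{-i\lambda z_1}v(s,z)\,ds\,dz$ then inserts $e^{-i\langle z,\lambda e_1\rangle}$, which merges with $e^{i\langle z,\eta+\xi\rangle}$ to produce the phase $\langle z,\eta+\xi-\lambda e_1\rangle\pm'(t-s)|\eta|\pm s|\xi|$, giving precisely the stated identity modulo $O(\lambda^{-\infty})$.

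I expect the main obstacle to be essentially bookkeeping rather than conceptual: balancing the half-density factors from $v$ against those from the two parametrices, carrying the $(-1)$ from $\Delta_g C = \partial_s^2 C$ correctly so that $|\xi|^2$ appears with the right sign, and verifying that every derivative that falls on a symbol or volume factor (rather than on a phase) genuinely reduces the symbol order and so can be folded into a redefinition of $b_\pm$ or $c_{\pm'}$ by one order. The conceptual content is minimal: the entire $O(\lambda^{-\infty})$ claim rests on the single observation that a smooth function times a smooth function is smooth, while a smooth function times an oscillating parametrix admits unlimited integration by parts in $s$.
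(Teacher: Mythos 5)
Your proposal matches the paper's argument step for step: split $v = v_1 + v_2$, substitute the parametrices for $S$ and $C$ (and their derivatives), keep only the parametrix-times-parametrix piece as the main term while discarding the three cross and remainder pieces, track the cancellation of the half-density prefactors, absorb the lower-order corrections (from derivatives hitting symbols or $|g|^{-1/4}$) into a redefinition of $b_\pm$ and $c_{\pm'}$, and finally take the $(s,z)$-Fourier transform at $(0,\lambda e_1)$. One small point of precision in your treatment of the cross terms: integration by parts in $s$ produces negative powers of $|\xi|$ or $|\eta|$ which make the frequency integral absolutely convergent, yielding a smooth compactly supported function of $z$; the $O(\lambda^{-\infty})$ decay then comes from taking the $z$-Fourier transform of that smooth function, not directly from the negative powers themselves.
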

	
	\subsection{Preparation for the method of stationary phase}
	
	We perform a change of variables $(\eta,\xi) \mapsto (\lambda \eta, \lambda \xi)$ in the integral in the above lemma and obtain an oscillatory integral
	\[
	(2\pi)^{-2n} \lambda^{2n} \sum_{\pm,\pm'} \iiiint e^{i\lambda \phi_{\pm,\pm'}(s,z,\eta,\xi)} \alpha_{\pm,\pm'}(s,z,\lambda \eta, \lambda \xi) \, d\xi \, d\eta \, dz \, ds
	\]
	with phase function
	\[
	\phi_{\pm,\pm'}(s,z,\eta,\xi) = \langle z, \eta + \xi - e_1 \rangle \pm' (t - s)|\eta| \pm s|\xi|
	\]
	and where $\alpha_{\pm,\pm'} \in S^1(\R^{1 + n} \times \R^{2n})$ with
	\begin{multline}\label{eq: amplitude def}
		\alpha_{\pm,\pm'}(s,z,\eta,\xi) \\
		= \left( \frac n 2 |\xi|^2 + \left(\frac n 2 - 1\right)\langle \eta, \xi \rangle_{g(z)} \right) \gamma(s) a(z) c_{\pm'}(t-s,z,\eta) b_\pm(s,z,\xi).
	\end{multline}
	We now make some cuts. 
	
	First, we let $\beta$ be a smooth function with $\beta(\xi) = 1$ for $|\xi| \leq 1/8$ and $\beta(\xi) = 0$ for $|\xi| \geq 1/4$. We cut the integral into  $\beta(\xi)\beta(\eta)$ and $1 - \beta(\xi)\beta(\eta)$ parts. For the former part, we integrate by parts in $z$ with the operator
	\[
	L = \frac{\eta + \xi - e_1}{i\lambda |\eta + \xi - e_1|^2} \cdot \nabla_z
	\]
	plenty of times to obtain something which vanishes to arbitrary order as $\lambda \to \infty$. We are then left with the latter part. We now have an amplitude supported in the set where $|\xi| \geq 1/8$ or $|\eta| \geq 1/8$. To save our notation from becoming cluttered, we will simply absorb the $1 - \beta(\xi)\beta(\eta)$ into the amplitude $\alpha_{\pm,\pm'}$ and free up the letter $\beta$ to be some other cutoff.
	
	Next, we argue that we obtain rapid decay for the terms where the signs $\pm$ and $\pm'$ disagree. We integrate by parts in $s$ with the operator
	\[
	L = \frac{1}{i\lambda(|\xi| + |\eta|)} \partial_s.
	\]
	Note that the denominator of the operator is bounded away from $0$ since $|\xi|$ and $|\eta|$ cannot both be small (less than $1/8$) simultaneously on the support of the integrand. Doing so plenty of times yields an $L^1$ integrand and plenty of decay in $\lambda$. Hence from here on out, we will only consider the case $\pm' = \pm$.
	
	Next, we exclude a neighborhood of the axes $\xi = 0$ or $\eta = 0$. For this, let $\beta(\eta) = 1$ for $|\eta| \leq 1/20$ and $\beta(\eta) = 0$ for $|\eta| \geq 1/10$. We cut the integrand into $\beta(\eta)$ and $1 - \beta(\eta)$ parts. On the support former part, we necessarily have $|\xi| \geq 1/8$, and hence $||\xi| - |\eta|| \geq 1/40$. Integrating by parts plenty of times using operator
	\[
	L = \frac{1}{i\lambda||\xi| - |\eta||} \partial_s
	\]
	then yields an amplitude which is compactly-supported in $\eta$ and $L^1$ in $\xi$, and also as much decay in $\lambda$ as desired. Hence, we are left with the $1 - \beta(\eta)$ part of the integral. We repeat the argument similarly to ensure the amplitude is also supported for $|\xi| \geq 1/20$.
	
	Next, we eliminate the term $\pm = \pm' = +$. This is done by noting that the phase function has no critical points on the support of the amplitude and integrating by parts with the operator
	\[
	L = \frac{1}{i\lambda|\nabla \phi|^2} \nabla \phi \cdot \nabla
	\]
	where the gradients are in all four variables $s,z,\xi,$ and $\eta$. It suffices to consider the one term with $\pm = \pm' = -$.
	
	Finally, we compactify the support of the amplitude. In particular, take $\beta(\xi, \eta) = 1$ for $|\xi| \leq 1$ and $|\eta| \leq 1$ and $\beta(\xi,\eta) = 0$ for $|\xi| \geq 2$ or $|\eta| \geq 2$. We make one last cut into $\beta(\xi,\eta)$ and $1 - \beta(\xi,\eta)$ parts and claim the latter is negligible. Note that if $(s,z,\xi,\eta)$ is a critical point of the phase function, we necessarily have
	\[
	|\xi| = |\eta|, \quad \eta + \xi = e_1, \quad \text{ and } \quad z = s\xi/|\xi| = (t - s)\eta/|\eta|,
	\]
	which has a unique critical point at
	\[
	(s,z,\eta,\xi) = \left( \frac t2, \frac t2 e_1, \frac 12 e_1, \frac12 e_1 \right),
	\]
	which is certainly not included in the support of this cut of the amplitude. We then integrate by parts using the same operator as above.
	
	Before recording our progress, we make one final observation. From \eqref{eq: amplitude def}, we have
	\begin{multline*}
		\alpha_{-,-}(s,z,\lambda \eta, \lambda \xi) \\
		= \lambda^2 \left( \frac n 2 |\xi|^2 + \left(\frac n 2 - 1\right)\langle \eta, \xi \rangle_{g(z)} \right) \gamma(s) a(z) c_{-}(t-s,z,\lambda \eta) b_-(s,z, \lambda \xi).
	\end{multline*}
	Since we are in geodesic normal coordinates, we have
	\[
	g(z) = \begin{bmatrix}
		1 & 0 \\
		0 & *
	\end{bmatrix} \qquad \text{ at } z = \frac t 2 e_1,
	\]
	and hence
	\[
	\langle \eta, \xi \rangle_{g(z)} = \eta_1 \xi_1 \qquad \text{ at } z = \frac t 2 e_1.
	\]
	It follows that at the critical point,
	\[
	\alpha_{-,-}\left(\frac t 2, \frac t2 e_1 ,\lambda \frac 1 2 e_1, \lambda \frac 1 2 e_1 \right) = \frac{i}{8}(n-1)\lambda + O(1).
	\]
	To summarize, we have shown:
	
	\begin{lemma} \label{lem: pre-stationary phase} The integral in Lemma \ref{lem: pre-cut fourier integral} is equal to
		\[
		(2\pi)^{-2n} \lambda^{2n} \iiiint e^{i\lambda \phi(s,z,\eta,\xi)} \alpha(\lambda; s,z,\eta,\xi) \, dz \, d\xi \, d\eta \, ds + O(\lambda^{-\infty})
		\]
		with phase function
		\[
		\phi(s,z,\eta,\xi) = \langle z, \eta + \xi - e_1 \rangle - (t - s)|\eta| - s|\xi|
		\]
		and where the amplitude $\alpha$ is a symbol of order $1$ in $\lambda$ with
		\[
		\alpha\left(\lambda; \frac t 2, \frac t 2 e_1, \frac 1 2 e_1, \frac 1 2 e_1 \right) = \frac{i}{8}(n-1)\lambda + O(1),
		\]
		and with support on $(s,z,\eta,\xi)$ satisfying
		\[
		\left|\frac t 2 - s\right| \leq \frac t 6, \quad \left|z - \frac t 2 e_1 \right| \leq \frac t 6, \quad \frac{1}{20} \leq |\xi| \leq 2, \quad \text{ and } \quad \frac{1}{20} \leq |\eta| \leq 2.
		\]
	\end{lemma}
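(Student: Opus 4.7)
The plan is to reduce the integral from Lemma \ref{lem: pre-cut fourier integral} to the stated oscillatory-integral form by a rescaling followed by a sequence of microlocal cutoffs and non-stationary phase integrations by parts, and then to read off the amplitude's value at the critical point from the leading symbols of the Hadamard parametrix together with the Gauss lemma. Concretely, I first substitute $(\eta, \xi) \mapsto (\lambda \eta, \lambda \xi)$; this produces the Jacobian $\lambda^{2n}$, converts the phase exponent to $i\lambda \phi_{\pm,\pm'}$ with $\phi_{\pm,\pm'}(s,z,\eta,\xi) = \langle z, \eta + \xi - e_1\rangle \pm' (t-s)|\eta| \pm s|\xi|$, and presents the amplitude as the symbol $\alpha_{\pm,\pm'}(s, z, \lambda\eta, \lambda\xi)$, which is of order $1$ in $\lambda$.

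Next I would carry out four successive cutoffs, each localizing further the $(\eta, \xi)$-support of the amplitude and discarding the complement via integration by parts, giving $O(\lambda^{-\infty})$ errors at every step. Step (a): multiply by a cutoff of the form $\beta(\xi)\beta(\eta)$ supported in $\{|\xi|, |\eta| \leq 1/4\}$; on its support $|\eta + \xi - e_1|$ is bounded below, so integrating by parts with $L = (\eta+\xi-e_1)/(i\lambda|\eta+\xi-e_1|^2) \cdot \nabla_z$ kills this piece and leaves $\{|\xi| \geq 1/8\} \cup \{|\eta| \geq 1/8\}$. Step (b): on this residual support $|\xi| + |\eta|$ is bounded below, so $L = (i\lambda(|\xi|+|\eta|))^{-1}\partial_s$ eliminates the mixed-sign terms $\pm \neq \pm'$. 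Step (c): introduce cutoffs near $\eta = 0$ and $\xi = 0$; because the previous cut forces $||\xi|-|\eta||$ to be bounded away from $0$ on those slices, $L = (i\lambda||\xi|-|\eta||)^{-1}\partial_s$ eliminates them and leaves the amplitude supported in $\{|\xi|, |\eta| \geq 1/20\}$. Step (d): for $\pm = \pm' = +$, I would compute the critical set of $\phi_{+,+}$ and check that its only solution has $z = -(t/2)e_1$, which is outside $\supp a$, so the phase is non-stationary on the support and a standard $L = (i\lambda|\nabla\phi|^2)^{-1}\nabla\phi \cdot \nabla$ kills this term. Finally, for the surviving $(-,-)$ term, the critical-point equations $|\xi|=|\eta|$, $\eta+\xi = e_1$, $z = s\xi/|\xi| = (t-s)\eta/|\eta|$ yield the unique critical point $(t/2,\,(t/2)e_1,\,e_1/2,\,e_1/2)$; compactifying the $(\eta,\xi)$-support at $|\xi|,|\eta|\leq 2$ costs only another $L$-integration-by-parts since this critical point is interior to the compact set.

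The final task is to evaluate $\alpha(\lambda;\,\cdot\,)$ at the critical point to leading order. Here I invoke the Gauss lemma: in geodesic normal coordinates about $x$, along the radial ray $z = re_1$ one has $g^{1j}(re_1) = \delta_{1j}$, so at $z = (t/2)e_1$ the pairing $\langle \eta, \xi\rangle_{g(z)}$ with $\eta = \xi = e_1/2$ equals $1/4$, giving $\frac n 2 |\xi|^2 + (\frac n 2 - 1)\langle \eta,\xi\rangle_{g(z)} = \frac{n-1}{4}$. Combined with the leading terms $b_-(s,z,\lambda\xi) = \tfrac12 + O(\lambda^{-2})$ and $c_-(s,z,\lambda\eta) = (-2i|\lambda\eta|)^{-1} + O(\lambda^{-3}) = i/\lambda + O(\lambda^{-3})$ from the Hadamard parametrix, and the normalization $\gamma(t/2) = 1$, $a((t/2)e_1)=1$ together with the extra $\lambda^2$ from the frequency rescaling in the derivatives, one finds $\alpha(\lambda;\,t/2,(t/2)e_1,e_1/2,e_1/2) = \lambda^2 \cdot \tfrac{n-1}{4} \cdot \tfrac12 \cdot (i/\lambda) + O(1) = \tfrac{i(n-1)}{8}\lambda + O(1)$.

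The main obstacle is the careful bookkeeping required to ensure that each cutoff is compatible with the previous one so that the non-stationary phase argument is valid on the actual support at that stage; in particular, step (d) requires verifying that the critical set of $\phi_{+,+}$ really misses $\supp a$ (it does because the critical $z$ has the wrong sign), and step (c) requires that the axis-cutoffs be introduced only after the mixed-sign terms have been discarded so that $|\xi| = |\eta|$ cannot occur on the residual support of $\alpha_{-,-}$ near the axes.
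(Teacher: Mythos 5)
Your proposal follows essentially the same route as the paper: rescale $(\eta,\xi)\mapsto(\lambda\eta,\lambda\xi)$, cut away the region where $\eta+\xi-e_1$ is bounded below and integrate by parts in $z$, kill the mixed-sign terms and the neighborhood of the coordinate axes by non-stationary phase in $s$, discard the $(+,+)$ term because its critical point lies at $z=-(t/2)e_1\notin\supp a$, compactify in $(\eta,\xi)$, and finally read off the amplitude at the critical point from the leading symbols $b_-\approx 1/2$, $c_-\approx(-2i|\cdot|)^{-1}$ and the block structure of $g$ at $z=(t/2)e_1$, yielding $\lambda^2\cdot\frac{n-1}{4}\cdot\frac12\cdot\frac{i}{\lambda}=\frac{i(n-1)}{8}\lambda$. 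This is exactly the argument the paper gives, with you supplying a slightly more explicit verification that the $\phi_{+,+}$ critical point misses the support.
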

	
	\subsection{The application of stationary phase}
	
	Finally, we apply the method of stationary phase to obtain asymptotics for the expression in Lemma \ref{lem: pre-stationary phase}. At the unique critical point, the Hessian of the phase function reads as
	\[
	\nabla^2 \phi = \begin{bmatrix}
		0 & 0 & e_1^t & -e_1^t \\
		0 & 0 & I & I \\
		e_1 & I & t(e_1e_1^t - I) & 0 \\
		-e_1 & I & 0 & t(e_1e_1^t - I)
	\end{bmatrix}.
	\]
	We compute the determinant and signature of the matrix by using some row and column operations. By taking the $(2n+2)$-th row and moving it up to occupy row $n + 2$, and similarly for the columns, we find the Hessian matrix above is conjugate to
	\[
	\left[
	\begin{array}{cc|c|cc|c|c}
		&  &  & 1 & -1 &  &  \\
		&  &  & 1 & 1 &  &  \\ \hline 
		&  &  &  &  & I_{n-1} & I_{n-1} \\ \hline
		1 & 1 &  &  &  &  &  \\
		-1 & 1 &  &  &  &  &  \\ \hline
		&  & I_{n-1} &  &  & -tI_{n-1} &  \\ \hline
		&  & I_{n-1} &  &  &  & -tI_{n-1}
	\end{array}
	\right].
	\]
	After further conjugating by elementary determinant-$1$ row operations, we obtain
	\[
	\left[
	\begin{array}{cc|c|cc|c|c}
		&  &  & -1 & 1 &  &  \\
		&  &  & 1 & 1 &  &  \\ \hline 
		&  &  &  &  & I_{n-1} &  \\ \hline
		-1 & 1 &  &  &  &  &  \\
		1 & 1 &  &  &  &  &  \\ \hline
		&  & I_{n-1} &  &  &  &   \\ \hline
		&  &  &  &  &  & -tI_{n-1}
	\end{array}
	\right]
	\]
	from which it is relatively straightforward to find both the determinant and the signature $\sigma$. In particular, we have
	\[
	|\det \nabla^2 \phi| = 4t^{n-1} \quad \text{ and } \quad \sigma = -n+1.
	\]
	We use the method of stationary phase (e.g. \cite[Theorem 7.7.5]{HI} or \cite[Proposition 1.2.4]{DuistermaatFIOs}) in all $3n + 1$ variables to obtain
	\[
	\widehat v(0,\lambda e_1) = (2\pi)^{-\frac{n-1}{2}} \frac{1}{16} \lambda^{\frac{n + 1}{2}} t^{-\frac{n-1}{2}} e^{-i \left(\frac{\pi}{4} (n-3) + \frac t 2 \right)} + O(\lambda^{\frac{n-1}{2}}).
	\]
	(The $t/2$ in the exponential is due to the phase function taking the value $-t/2$ at the critical point.)
	At last, we obtain Lemma \ref{lem: h asymptotics} after feeding this asymptotics into \eqref{eq: D_h phi real part}.
	
	\section{Further directions} \label{sec: further directions} 
	
	The purpose of this paper is to introduce the echolocation question (Question \ref{echolocation question}) and to resolve it generically. Specific special cases should be addressed next.
	
	\subsection{Specific examples} The examples provided in Section \ref{sec: examples} are only the very start of an exploration of Question \ref{echolocation question}. We list a few more here which we believe are tractable. 
	
	The first is the example of the rectangle with Dirichlet boundary conditions, which was explored in the one- and two-dimensional cases in Section \ref{sec: examples}. {What happens  in higher dimensions when multiplicities are allowed?}
	
	There are other obvious choices of Euclidean domains that should be explored, such as ellipses and triangles, both with Dirichlet (or Neumann) boundary conditions.
	
	Zoll manifolds may provide another source of interesting examples. The paper of Zelditch \cite{zoll} provides fine estimates of the local (pointwise) Weyl counting function $N_x(\lambda)$ \eqref{pointwise counting}. It would be interesting to see if these estimates can be used to establish ``echolocation'' on such a manifold. We plan to work on this problem in the near future.
	
	\subsection{Hearing the shape of a submanifold through its Kuznecov sum.} Let $H$ be a compact boundaryless smooth connected submanifold embedded in $M$. Consider the Kuznecov sum
	\begin{equation}\label{Kuz}
		N_H(\lambda)=\sum_{\lambda_j\le\lambda}\left|\int_H e_j(x)\,dV_H(x)\right|^2.
	\end{equation}
	The local Weyl counting function $N_x(\lambda)$ \eqref{pointwise counting} can be seen as a special case of \eqref{Kuz} when $H$ is a single point $\{x\}.$ General Weyl-type asymptotics for \eqref{Kuz} were established by Zelditch\cite{z92}.  
	
	Similar to the Weyl counting function, one can also ``hear'' the volume of $H$ by looking at the top term of the Kuznecov sum. Furthermore, a recent work of the authors \cite{2term} shows that one can obtain a significant amount of geometric knowledge of $H$ from just the first two terms of \eqref{Kuz}. It is then natural to ask the following question.
	
	\begin{question}[Hearing the shape of a submanifold]\label{Kuznecov question}
		Can one determine the embedding $H\hookrightarrow M$ up to isometry, given the complete knowledge of $M$ and the sum \eqref{Kuz}?
	\end{question} 
	
	Question \ref{Kuznecov question} is more complicated than Question \ref{echolocation question}. To determine the embedding, one needs to determine both the shape and the location of $H$ in $M$. Even though we do not have a definite answer to this question, we believe that, at the very least, some special kind of submanifolds could be uniquely determined by their Kuznecov sum \eqref{Kuz}. 
	
	There is a natural Euclidean counterpart to this question. Suppose that $S$ is a smooth compact manifold embedded in $\mathbb R^d$. Let $d\sigma$ be the induced Lebesgue measure on $S$. Consider the quantity
	\begin{equation}\label{Euclidean}
		N_S(\lambda)=\int_{|\xi|\le\lambda}|\widehat{d\sigma}|^2(\xi)\,d\xi.
	\end{equation}
	\begin{question}[Euclidean 
		version]\label{Euclidean question}
		Can one determine the shape of $S$ given the complete knowledge of \eqref{Euclidean}? 
	\end{question}
	
	To further expand on Question \ref{Kuznecov question}, we may also consider the case when $H$ is disconnected. The simplest such case is when $H$ is the set of two distinct points $\{x,y\}$ on $M$. Then the corresponding Kuznecov sum is
	
	\begin{equation}\label{two point}
		N_{x,y}(\lambda)=\sum_{\lambda_j\le\lambda}|e_j(x)+e_j(y)|^2.
	\end{equation}
	\begin{question}[Simultaneous echolocation]\label{two point question}
		Can one determine the locations of both $x$ and $y$ up to isometry, given the complete knowledge of $M$ and the sum \eqref{two point}? 
	\end{question}
	
	{\subsection{Echolocation on graphs.}
		It is known that there are many examples of cospectral graphs (see, e.g., \cite{butler}). These are pairs of non-isomorphic graphs that have the same multiset of Laplacian eigenvalues. It is interesting to consider echolocation on a given graph. More precisely, let $G$ be a graph with $n$ nodes $\{v_i\}_{i=0}^{n-1}$ and normalized Laplacian matrix $\mathcal L$. Suppose that $\mathcal L$ has eigenvalues $0=\lambda_0\le\lambda_1\le\cdots\le\lambda_{n-1}$ and a corresponding orthonormal eigenbasis $\{e_j\}_{j=0}^{n-1}$. 
		Consider
		\begin{equation}
			\label{graph}
			N_{v_i}(\lambda)=\sum_{\lambda_j\le\lambda}|e_{ji}|^2.
		\end{equation}
		In graph theory, two vertices on a graph are said to be similar if there is an automorphism of the graph that maps one to the other. One may ask, in a given graph, can non-similar vertices share the same \eqref{graph}? This problem has been extensively studied, especially if we replace $ \mathcal L $ with the adjacent matrix $ A $. Two points that share the same $N_{v_i}(\lambda)$ with respect to the adjacent matrix $A$ are called {\it cospectral vertices}. This concept dates back to Schwenk \cite{schwenk73}, where a tree of 9 nodes containing a pair of non-similar cospectral vertices is given. It is now well-known that many graphs with non-similar cospectral vertices exist. The same concept extends to the context of normalized Laplacian $\mathcal L$, see, e.g., \cite{godsil93,coutinho23}. These two versions of cospectral vertices coincide for regular graphs. It would be interesting to find the minimal (regular) graphs
		on which echolocation fails and characterize them. We wish to explore this problem in the near future.}
	
	\subsection{Echolocation with incomplete knowledge} As discussed in remark
	\ref{incomplete remark}, for certain generic cases, it is possible to hear the Weyl counting function at a single point on the manifold. Thus, in some special cases, one should be able to hear the shape of the manifold and simultaneously infer one's location on the manifold.
	
	To illustrate this, consider a string $M=[0,a]$ with Dirichlet boundary conditions. In section \ref{sec: examples}, we showed that ``echolocation'' holds on $M$. The first eigenfunction on $M$ is of the form
	\[ e_1(x) = \sqrt{\frac{2}{a}} \sin(\pi x/a).\]
	Then for any $x\in(0,\pi),$ $e_1(x)\neq 0.$ So one can recover the first non-zero eigenvalue $\lambda_1=\pi/a$ from $N_x(\lambda)$ at any $x$ in the interior of the string. The length of the string is then determined by $a=\pi/\lambda_1$. It would be interesting to find more examples like this.

	For a manifold, { we generally cannot} determine its shape from $N_x(\lambda)$ at a point $x$ alone. Nevertheless, animals in the real world with functioning echolocation systems usually do not know the geometry of their environment in advance. They often have two ears with which they can perceive not only the volume and frequency of echoes but also the directions from which the echoes come. It is a somewhat vague but exciting question whether one can hear the shape of a manifold and one's location in it at the same time if one can hear the directional vectors of all the echoes in addition to $N_x(\lambda)$.
	
	\subsection{Echolocation with finitely many audible quantities}

	In the proof of Theorem \ref{thm: generic echolocation}, we need to use infinitely many audible quantities, namely $\varphi(g,t_k,x)$ defined in \eqref{eq: renormalized wave} for $k = 0,1,2,\ldots$. We show that for a generic class of smooth metrics, there is an injective map $M \to \R^\N$ whose coordinates are all audible quantities. This may be more information than needed, so we can ask a stronger version of Question \ref{echolocation question}.
	
	\begin{question}
		Let $M$ be a compact Riemannian manifold with or without boundary and let $\operatorname{Isom}(M)\backslash M$ denote the quotient of $M$ by the action of its isometry group $\operatorname{Isom}(M)$. What is the smallest integer $k$ such that there exist smooth audible quantities $\varphi_1, \ldots, \varphi_k$ for which the map
		\[
		x \mapsto (\varphi_1(x), \ldots, \varphi_k(x))
		\]
		induces a continuous embedding $\operatorname{Isom}(M)\backslash M \to \R^k$?
	\end{question}
	
	In analogy with the numerology of other generic embedding results, such as the weak Whitney embedding theorem and Takens' theorem \cite{takens}, it is reasonable to expect $k \leq 2\dim M + 1$ for a generic class of metrics. We hope to prove this in the sequel.

	\bibliography{references}{} 
	\bibliographystyle{alpha}
	
\end{document}